\documentclass[11pt]{amsart}
\usepackage{mathrsfs,latexsym,amsfonts,amssymb}
\setcounter{page}{1} \setlength{\textwidth}{14.6cm}
\setlength{\textheight}{22.5cm} \setlength{\evensidemargin}{0.8cm}
\setlength{\oddsidemargin}{0.8cm} \setlength{\topmargin}{0.8cm}

\newtheorem{theorem}{Theorem}[section]
\newtheorem{lemma}[theorem]{Lemma}
\newtheorem{corollary}[theorem]{Corollary}
\newtheorem{question}[theorem]{Question}
\newtheorem{example}[theorem]{Example}
\theoremstyle{definition}
\newtheorem{definition}[theorem]{Definition}
\newtheorem{proposition}[theorem]{Proposition}
\theoremstyle{remark}

\begin{document}

\title[Separability in (strongly) topological gyrogroups]
{Separability in (strongly) topological gyrogroups}

\author{Meng Bao}
\address{(Meng Bao): College of Mathematics, Sichuan University, Chengdu 610064, P. R. China}
\email{mengbao95213@163.com}

\author{Xiaoyuan Zhang}
\address{(Xiaoyuan Zhang): 1. College of Mathematics, Sichuan University, Chengdu 610064, P. R. China; 2. School of Big Data Science, Hebei Finance University, Baoding 071051, P. R. China}
\email{405518791@qq.com}

\author{Xiaoquan Xu*}
\address{(Xiaoquan Xu): School of mathematics and statistics,
Minnan Normal University, Zhangzhou 363000, P. R. China}
\email{xiqxu2002@163.com}

\thanks{The authors are supported by the National Natural Science Foundation of China (Nos. 11661057, 12071199) and the Natural Science Foundation of Jiangxi Province, China (No. 20192ACBL20045)\\
*corresponding author}

\keywords{topological gyrogroups; strongly topological gyrogroups; left $\omega$-narrow; feathered; separability.}%insert keywords
\subjclass[2010]{Primary 54A20; secondary 11B05; 26A03; 40A05; 40A30; 40A99.}%insert subject class

%\date{\today}
\begin{abstract}
Separability is one of the most basic and important topological properties. In this paper, the separability in (strongly) topological gyrogroups is studied. It is proved that every first-countable left $\omega$-narrow strongly topological gyrogroup is separable. Furthermore, it is shown that if a feathered strongly topological gyrogroup $G$ is isomorphic to a subgyrogroup of a separable strongly topological gyrogroup, then $G$ is separable. Therefore, if a metrizable strongly topological gyrogroup $G$ is isomorphic to a subgyrogroup of a separable strongly topological gyrogroup, then $G$ is separable, and if a locally compact strongly topological gyrogroup $G$ is isomorphic to a subgyrogroup of a separable strongly topological gyrogroup, then $G$ is separable.
\end{abstract}

\maketitle
\section{Introduction}
In 2002, A.A. Ungar studied the $c$-ball of relativistically admissible velocities with Einstein velocity addition in \cite{UA2002} and he posed the concept of a gyrogroup. As we all know, the Einstein velocity addition $\oplus _{E}$ is given as the following: $$\mathbf{u}\oplus _{E}\mathbf{v}=\frac{1}{1+\frac{\mathbf{u}\cdot \mathbf{v}}{c^{2}}}(\mathbf{u}+\frac{1}{\gamma _{\mathbf{u}}}\mathbf{v}+\frac{1}{c^{2}}\frac{\gamma _{\mathbf{u}}}{1+\gamma _{\mathbf{u}}}(\mathbf{u}\cdot \mathbf{v})\mathbf{u}),$$ where $\mathbf{u,v}\in \mathbb{R}_{c}^{3}=\{\mathbf{v}\in \mathbb{R}^{3}:||\mathbf{v}||<c\}$ and $\gamma _{\mathbf{u}}$ is given by $$\gamma _{\mathbf{u}}=\frac{1}{\sqrt{1-\frac{\mathbf{u}\cdot \mathbf{u}}{c^{2}}}}.$$ It is well-known that the gyrogroup has a weaker algebraic structure than a group. In 2017, W. Atiponrat \cite{AW} introduced the topological gyrogroups. A gyrogroup $G$ is endowed with a topology such that the binary operation $\oplus: G\times G\rightarrow G$ is jointly continuous and the inverse mapping $\ominus (\cdot): G\rightarrow G$, i.e. $x\rightarrow \ominus x$, is also continuous. He claimed that $T_{0}$ and $T_{3}$ are equivalent with each other in topological gyrogroups. Then Z. Cai, S. Lin and W. He in \cite{CZ} proved that every topological gyrogroup is a rectifiable space. In 2019, the authors \cite{BL} defined the concept of strongly topological gyrogroups and found that M\"{o}bius gyrogroups, Einstein gyrogroups, and Proper velocity gyrogroups are all strongly topological gyrogroups. Furthermore, the authors gave a characterization for a strongly topological gyrogroup being a feathered space, that is, a strongly topological gyrogroup $G$ is feathered if and only if it contains a compact $L$-subgyrogroup $H$ such that the quotient space $G/H$ is metrizable. Therefore, the authors proved that every feathered strongly topological gyrogroup is paracompact. Moreover, the authors proved that every strongly topological gyrogroup with a countable pseudocharacter is submetrizable and every locally paracompact strongly topological gyrogroup is paracompact, see \cite{BL1,BL2}.

A topological gyrogroup $G$ is called left (right) $\omega$-narrow if, for every open neighborhood $V$ of the identity element $0$ in $G$, there exists a countable subset $A$ of $G$ such that $G=A\oplus V$ $(G=V\oplus A)$. If $G$ is left $\omega$-narrow and right $\omega$-narrow, then $G$ is $\omega$-narrow. Moreover, a topological gyrogroup $G$ is feathered if it contains a non-empty compact subset $K$ of countable character in $G$. In Section 3, we show that the topological product of an arbitrary family of $\omega$-narrow topological gyrogroups is an $\omega$-narrow topological gyrogroup and the product of countably many feathered topological gyrogroups is a feathered topological gyrogroup. Furthermore, we also investigate the relationship between the property of (left) $\omega$-narrow and separability in strongly topological gyrogroups, and prove that every first-countable left $\omega$-narrow strongly topological gyrogroup is separable, which gives a partial answer to \cite[Question 6.13]{LF4}.

A topological space which has a dense countable subspace is called {\it separable}. It is well-known that a subspace of a separable metrizable space is separable, but a closed subspace of a separable Hausdorff topological space is not necessarily separable \cite{LMT}. Even though $Y$ is a closed linear subspace of a separable Hausdorff topological vector space $X$, $Y$ is not necessarily separable \cite{LW}. Therefore, it is meaningful to study the relative properties in topological gyrogroups or strongly topological gyrogroups. In particular, we want to know under what conditions a (closed) subgyrogroup of a (strongly) topological gyrogroup is separable. In Section 4, we show that if a feathered strongly topological gyrogroup $G$ is isomorphic to a subgyrogroup of a separable strongly topological gyrogroup, then $G$ is separable. Therefore, we deduce that if a metrizable strongly topological gyrogroup $G$ is isomorphic to a subgyrogroup of a separable strongly topological gyrogroup, then $G$ is separable, and if a locally compact strongly topological gyrogroup $G$ is isomorphic to a subgyrogroup of a separable strongly topological gyrogroup, then $G$ is separable.

\smallskip
\section{Preliminaries}
In this section, we introduce the necessary notations, terminologies and some facts about topological gyrogroups.

Throughout this paper, all topological spaces are assumed to be Hausdorff, unless otherwise is explicitly stated. Let $\mathbb{N}$ be the set of all positive integers and $\omega$ the first infinite ordinal. Let $X$ be a topological space and $A \subseteq X$ be a subset of $X$.
  The {\it closure} of $A$ in $X$ is denoted by $\overline{A}$ and the
  {\it interior} of $A$ in $X$ is denoted by $\mbox{Int}(A)$. The readers may consult \cite{AA, E, linbook} for notation and terminology not explicitly given here.
\begin{definition}\cite{AW}
Let $G$ be a nonempty set, and let $\oplus: G\times G\rightarrow G$ be a binary operation on $G$. Then the pair $(G, \oplus)$ is called a {\it groupoid}. A function $f$ from a groupoid $(G_{1}, \oplus_{1})$ to a groupoid $(G_{2}, \oplus_{2})$ is called a {\it groupoid homomorphism} if $f(x\oplus_{1}y)=f(x)\oplus_{2} f(y)$ for any elements $x, y\in G_{1}$. Furthermore, a bijective groupoid homomorphism from a groupoid $(G, \oplus)$ to itself will be called a {\it groupoid automorphism}. We write $\mbox{Aut}(G, \oplus)$ for the set of all automorphisms of a groupoid $(G, \oplus)$.
\end{definition}

\begin{definition}\cite{UA}
Let $(G, \oplus)$ be a groupoid. The system $(G,\oplus)$ is called a {\it gyrogroup}, if its binary operation satisfies the following conditions:

\smallskip
(G1) There exists a unique identity element $0\in G$ such that $0\oplus a=a=a\oplus0$ for all $a\in G$.

\smallskip
(G2) For each $x\in G$, there exists a unique inverse element $\ominus x\in G$ such that $\ominus x \oplus x=0=x\oplus (\ominus x)$.

\smallskip
(G3) For all $x, y\in G$, there exists $\mbox{gyr}[x, y]\in \mbox{Aut}(G, \oplus)$ with the property that $x\oplus (y\oplus z)=(x\oplus y)\oplus \mbox{gyr}[x, y](z)$ for all $z\in G$.

\smallskip
(G4) For any $x, y\in G$, $\mbox{gyr}[x\oplus y, y]=\mbox{gyr}[x, y]$.
\end{definition}

Notice that a group is a gyrogroup $(G,\oplus)$ such that $\mbox{gyr}[x,y]$ is the identity function for all $x, y\in G$. The definition of a subgyrogroup is given as follows.

\begin{definition}\cite{ST}
Let $(G,\oplus)$ be a gyrogroup. A nonempty subset $H$ of $G$ is called a {\it subgyrogroup}, denoted
by $H\leq G$, if $H$ forms a gyrogroup under the operation inherited from $G$ and the restriction of $gyr[a,b]$ to $H$ is an automorphism of $H$ for all $a,b\in H$.

\smallskip
Furthermore, a subgyrogroup $H$ of $G$ is said to be an {\it $L$-subgyrogroup}, denoted
by $H\leq_{L} G$, if $gyr[a, h](H)=H$ for all $a\in G$ and $h\in H$.
\end{definition}

\begin{definition}\cite{AW}
A triple $(G, \tau, \oplus)$ is called a {\it topological gyrogroup} if the following statements hold:

\smallskip
(1) $(G, \tau)$ is a topological space.

\smallskip
(2) $(G, \oplus)$ is a gyrogroup.

\smallskip
(3) The binary operation $\oplus: G\times G\rightarrow G$ is jointly continuous while $G\times G$ is endowed with the product topology, and the operation of taking the inverse $\ominus (\cdot): G\rightarrow G$, i.e. $x\rightarrow \ominus x$, is also continuous.
\end{definition}

Obviously, every topological group is a topological gyrogroup. However, every topological gyrogroup whose gyrations are not identically equal to the identity is not a topological group.

\begin{example}\cite{AW}
The Einstein gyrogroup with the standard topology is a topological gyrogroup but not a topological group.
\end{example}

The Einstein gyrogroup has been introduced in the Introduction. It was proved in \cite{UA} that $(R^{3}_{c},\oplus _{E})$ is a gyrogroup but not a group. Moreover, with the standard topology inherited from $R^{3}$, it is clear that $\oplus _{E}$ is continuous. Finally, $-\mathbf{u}$ is the inverse of $\mathbf{u}\in R^{3}$ and the operation of taking the inverse is also continuous. Therefore, the Einstein gyrogroup $(R^{3}_{c},\oplus _{E})$ with the standard topology inherited from $R^{3}$ is a topological gyrogroup but not a topological group.

Next, we introduce the definition of a strongly topological gyrogroup, it is very important in this paper.

\begin{definition}{\rm (\cite{BL})}\label{d11}
Let $G$ be a topological gyrogroup. We say that $G$ is a {\it strongly topological gyrogroup} if there exists a neighborhood base $\mathscr U$ of $0$ such that, for every $U\in \mathscr U$, $\mbox{gyr}[x, y](U)=U$ for any $x, y\in G$. For convenience, we say that $G$ is a strongly topological gyrogroup with neighborhood base $\mathscr U$ of $0$.
\end{definition}

For each $U\in \mathscr U$, we can set $V=U\cup (\ominus U)$. Then, $$gyr[x,y](V)=gyr[x, y](U\cup (\ominus U))=gyr[x, y](U)\cup (\ominus gyr[x, y](U))=U\cup (\ominus U)=V,$$ for all $x, y\in G$. Obviously, the family $\{U\cup(\ominus U): U\in \mathscr U\}$ is also a neighborhood base of $0$. Therefore, we may assume that $U$ is symmetric for each $U\in\mathscr U$ in Definition~\ref{d11}. Moreover, it is clear that every topological group is a strongly topological gyrogroup, and every strongly topological gyrogroup is a topological gyrogroup. However, there is a strongly topological gyrogroup which is not a topological group, see the following Example \ref{lz1}.

\begin{example}\cite{BL}\label{lz1}
Let $D$ be the complex open unit disk $\{z\in C:|z|<1\}$. We consider $D$ with the standard topology. As in \cite[Example 2]{AW}, a M\"{o}bius addition $\oplus _{M}: D\times D\rightarrow D$ is a function such that $$a\oplus _{M}b=\frac{a+b}{1+\bar{a}b}\ \mbox{for all}\ a, b\in D.$$ Then $(D, \oplus _{M})$ is a gyrogroup, and it follows from \cite[Example 2]{AW} that $$gyr[a, b](c)=\frac{1+a\bar{b}}{1+\bar{a}b}c\ \mbox{for any}\ a, b, c\in D.$$ For any $n\in\omega$, let $U_{n}=\{x\in D: |x|\leq \frac{1}{n}\}$. Then, $\mathscr U=\{U_{n}: n\in \omega\}$ is a neighborhood base of $0$. Moreover, we observe that $|\frac{1+a\bar{b}}{1+\bar{a}b}|=1$. Therefore, we obtain that $gyr[x, y](U)\subset U$, for any $x, y\in D$ and each $U\in \mathscr U$, then it follows that $gyr[x, y](U)=U$ by \cite[Proposition 2.6]{ST}. Hence, $(D, \oplus _{M})$ is a strongly topological gyrogroup. However, $(D, \oplus _{M})$ is not a group \cite[Example 2]{AW}.
\end{example}

Indeed, we know that M\"{o}bius gyrogroups, Einstein gyrogroups, and Proper velocity gyrogroups, that were studied in \cite{FM, FM1,UA}, are all strongly topological gyrogroups. Therefore, they are all topological gyrogroups and rectifiable spaces. But all of them are not topological groups. Further, it was also proved in \cite[Example 3.2]{BL} that there exists a strongly topological gyrogroup which has an infinite $L$-subgyrogroup.

Moreover, in \cite{BL1}, the authors proved that every $T_{0}$ strongly topological gyrogroup is completely regular. Then, we will give an example to show that there is a completely regular strongly topological gyrogroup which is not a normal space.

\begin{example}
There is a completely regular strongly topological gyrogroup which is not a normal space.
\end{example}

Indeed, let $X$ be an arbitrary $T_{0}$ strongly topological gyrogroup (such us Example \ref{lz1}), and let $Y$ be an any $T_{0}$ topological group. Put $G=X\times Y$ with the product topology and the operation with coordinate. Then $G$ is an completely regular strongly topological gyrogroup since $X$ and $Y$ both are completely regular. However, $G$ is not a normal space.

\section{Products of two classes of topological gyrogroups}
In this section, we mainly study the products of $\omega$-narrow topological gyrogroups and feathered topological gyrogroups. We show that the topological product of an arbitrary family of $\omega$-narrow topological gyrogroups is an $\omega$-narrow topological gyrogroup and the product of countably many feathered topological gyrogroups is a feathered topological gyrogroup. Moreover, we prove that every first-countable left $\omega$-narrow strongly topological gyrogroup is separable, which gives a partial answer to \cite[Question 6.13]{LF4}.

A topological gyrogroup $G$ is called left (right) $\omega$-narrow \cite{AA} if, for every open neighborhood $V$ of the identity element $0$ in $G$, there exists a countable subset $A$ of $G$ such that $G=A\oplus V$ $(G=V\oplus A)$. If $G$ is left $\omega$-narrow and right $\omega$-narrow, then $G$ is $\omega$-narrow. Moreover, it was proved that the quotient space $G/H$ is homogeneous in \cite{BL1} if $G$ is a strongly topological gyrogroup with a symmetric neighborhood base $\mathscr U$ and $H$ is an $L$-subgyrogroup generated from $\mathscr U$. Therefore, the definition of $\omega$-narrow of $G/H$ is the same with gyrogroups'.

\begin{proposition}\label{mt1}
If a topological gyrogroup $H$ is a continuous homomorphic image of an $\omega$-narrow topological gyrogroup $G$, then $H$ is also $\omega$-narrow.
\end{proposition}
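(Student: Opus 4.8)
The plan is to transport the countable ``covering'' sets from $G$ to $H$ along the homomorphism. Let $f\colon G\to H$ denote a continuous surjective groupoid homomorphism witnessing that $H$ is a continuous homomorphic image of $G$. First I would record the elementary fact that $f$ sends the identity of $G$ to the identity of $H$: writing $a=f(0)$, the homomorphism property gives $a\oplus a=f(0\oplus 0)=f(0)=a=a\oplus 0$, and the left cancellation law valid in every gyrogroup then forces $a=0$. Consequently, for any open neighborhood $W$ of $0$ in $H$, the set $V=f^{-1}(W)$ is an open neighborhood of $0$ in $G$ by continuity of $f$.

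Next I would invoke the hypothesis that $G$ is $\omega$-narrow. Applied to the neighborhood $V$, this yields a countable subset $A\subseteq G$ with $G=A\oplus V$ and, since being $\omega$-narrow means being both left and right $\omega$-narrow, a countable subset $B\subseteq G$ with $G=V\oplus B$. The key step is to push these equalities forward through $f$. Because $f$ is a homomorphism, it carries a product of two sets to the product of their images; thus $f(A\oplus V)=f(A)\oplus f(V)$, and since $V=f^{-1}(W)$ we have $f(V)\subseteq W$. Combining this with the surjectivity of $f$ gives
$$H=f(G)=f(A\oplus V)=f(A)\oplus f(V)\subseteq f(A)\oplus W\subseteq H,$$
so that $H=f(A)\oplus W$ with $f(A)$ countable. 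The symmetric computation starting from $G=V\oplus B$ yields $H=W\oplus f(B)$ with $f(B)$ countable. As $W$ was an arbitrary neighborhood of $0$ in $H$, this shows that $H$ is both left and right $\omega$-narrow, hence $\omega$-narrow.

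I do not expect any serious obstacle here; the argument is essentially a diagram chase. The only points that require a line of justification are the preservation of the identity under $f$ (which rests on the left cancellation law in gyrogroups) and the set-level identity $f(A\oplus V)=f(A)\oplus f(V)$, which is immediate from $f(x\oplus y)=f(x)\oplus f(y)$. Everything else is the formal manipulation of the defining inclusions together with the continuity and surjectivity of $f$.
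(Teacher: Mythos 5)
Your argument is correct and is essentially the paper's own proof: pull back the neighborhood along $f$, apply $\omega$-narrowness of $G$, and push the countable set forward using $f(A\oplus V)=f(A)\oplus f(V)$ and surjectivity. The only differences are cosmetic — you write out the right-narrow case and the preservation of the identity explicitly, where the paper says ``similar'' and takes the latter for granted.
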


\begin{proof}
We just prove the situation of left $\omega$-narrow. For the situation of right $\omega$-narrow, the proof is similar. For an arbitrary open neighborhood $V$ of the identity element $0$ in $H$, since $f$ is a continuous homomorphism from $G$ onto $H$, it follows that $f^{-1}(V)$ is an open neighborhood of $0$ in $G$. It follows from the left $\omega$-narrow property of $G$ that there exists a countable subset $A$ of $G$ such that $G=A\oplus f^{-1}(V)$. Therefore, $$H=f(G)=f(A\oplus f^{-1}(V))=f(A)\oplus f(f^{-1}(V))\subset f(A)\oplus V.$$ Since $f(A)$ is countable, it is clear that $H$ is left $\omega$-narrow.
\end{proof}

\begin{proposition}\label{3mt2}
The topological product of an arbitrary family of $\omega$-narrow topological gyrogroups is an $\omega$-narrow topological gyrogroup.
\end{proposition}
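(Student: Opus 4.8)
The plan is as follows. Write $\{G_i : i \in I\}$ for the given family and let $G = \prod_{i \in I} G_i$ carry the product topology together with the coordinatewise operation $(x \oplus y)_i = x_i \oplus_i y_i$. I would first record that $G$ is a topological gyrogroup: the identity is $0 = (0_i)_{i \in I}$, the inverse is taken coordinatewise, the gyrations act coordinatewise, so the axioms (G1)--(G4) hold coordinate by coordinate, and the joint continuity of $\oplus$ together with the continuity of $\ominus(\cdot)$ follow from the corresponding continuity in each factor and the definition of the product topology. It then remains to verify that $G$ is $\omega$-narrow; since the argument for the right case is symmetric to that for the left case, I would treat only left $\omega$-narrow.

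The first reduction is to basic neighborhoods. Observe that if $V' \subseteq V$ are open neighborhoods of $0$ and some countable $A \subseteq G$ satisfies $G = A \oplus V'$, then $G = A \oplus V' \subseteq A \oplus V \subseteq G$, so $G = A \oplus V$ as well. Hence, given an arbitrary open neighborhood $V$ of $0$, I may shrink it to a basic one and assume
$$V = \prod_{i \in F} V_i \times \prod_{i \in I \setminus F} G_i,$$
where $F \subseteq I$ is finite and each $V_i$ is an open neighborhood of $0_i$ in $G_i$.

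Now I would apply $\omega$-narrowness factorwise. For each $i \in F$, the left $\omega$-narrowness of $G_i$ yields a countable set $A_i \subseteq G_i$ with $G_i = A_i \oplus_i V_i$. Define
$$A = \{ a \in G : a_i \in A_i \text{ for } i \in F, \ a_i = 0_i \text{ for } i \in I \setminus F \}.$$
Since $F$ is finite and each $A_i$ is countable, $A$ is (in bijection with) a finite product of countable sets and hence countable; this is precisely where the reduction to finite support is essential, as an infinite product of nontrivial countable sets would fail to be countable.

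Finally I would check $G = A \oplus V$ coordinatewise, which is the only place where the gyrogroup structure enters and is mild because $\oplus$ is coordinatewise. Given $g \in G$: for $i \in F$ write $g_i = a_i \oplus_i v_i$ with $a_i \in A_i$ and $v_i \in V_i$, using $G_i = A_i \oplus_i V_i$; for $i \in I \setminus F$ put $a_i = 0_i$ and $v_i = g_i \in G_i = V_i$, so that $a_i \oplus_i v_i = 0_i \oplus_i g_i = g_i$. Then $a = (a_i) \in A$, $v = (v_i) \in V$, and $(a \oplus v)_i = a_i \oplus_i v_i = g_i$ for every $i$, whence $g = a \oplus v$. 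Thus $G = A \oplus V$ with $A$ countable, completing the left case; the right case is identical with $A_i \oplus_i V_i$ replaced by $V_i \oplus_i A_i$. The main (and essentially only) obstacle is the countability bookkeeping handled by the finite-support reduction; everything else is routine.
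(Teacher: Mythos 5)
Your proof is correct and follows essentially the same route as the paper: reduce to a basic open set $\prod_{i\in F}V_i\times\prod_{i\in I\setminus F}G_i$ with $F$ finite, apply left $\omega$-narrowness in each factor $i\in F$, and assemble a countable set with finite support. In fact your explicit choice $A=\{a\in G: a_i\in A_i \text{ for } i\in F,\ a_i=0_i \text{ otherwise}\}$, together with the coordinatewise verification of $G=A\oplus V$, is more careful than the paper's, which writes $A=\bigcup_{i\in J}A_i$ (a set not literally contained in $G$, and a union where a finite product is what is actually needed).
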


\begin{proof}
We also just prove the situation of left $\omega$-narrow. For the situation of right $\omega$-narrow, the proof is similar. Let $\{(G_{i},\tau _{i},\oplus _{i}):i\in I\}$ be an indexed family of left $\omega$-narrow topological gyrogroups. It follows from Theorem 2.1 in \cite{ST2} and Theorem 5 in \cite{AW} that $G=(\prod _{i\in I}G_{i},\oplus)$ is a topological gyrogroup equipped with the product topology. Then we show that $(\prod _{i\in I}G_{i},\oplus)$ is left $\omega$-narrow. Let $U$ be a basic open subset of $\prod _{i\in I}G_{i}$. Then $U=\prod _{i\in I}U_{i}$, where $U_{i}$ is open in $G_{i}$ for each $i\in I$. For the product topology, we know that $U_{i}\not =G_{i}$ for only finitely many $i\in I$. Therefore let $U=\prod _{i\in J}U_{i}\times \prod _{i\in I\setminus J}G_{i}$, where $J$ is a finite subset of $I$. Since every $G_{i}$ is left $\omega$-narrow, it follows that there exists a countable subset $A_{i}$ of $G_{i}$ such that $G_{i}=A_{i}\oplus U_{i}$. Set $A=\bigcup _{i\in J}A_{i}$. It is obvious that $A$ is countable. Moreover, $G\subset A\oplus U$ and the proof is completed.
\end{proof}

\begin{theorem}
Suppose that $(G,\tau ,\oplus)$ is a strongly topological gyrogroup with a synmetric neighborhood base $\mathscr U$ at $0$. If $G$ contains a dense subgyrogroup $H$ such that $H$ is left $\omega$-narrow, then $G$ is also left $\omega$-narrow.
\end{theorem}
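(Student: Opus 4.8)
The plan is to adapt the classical argument that a topological group with a dense $\omega$-narrow subgroup is $\omega$-narrow, while tracking carefully the failure of associativity, which will be absorbed by the gyration-invariance built into $\mathscr U$. First I would fix an arbitrary open neighborhood $V$ of $0$ in $G$. By joint continuity of $\oplus$ at $(0,0)$ and the fact that $\mathscr U$ is a neighborhood base at $0$, I can choose a symmetric $U\in\mathscr U$ with $U\oplus U\subseteq V$. Then $U\cap H$ is an open neighborhood of $0$ in the subspace $H$, so the left $\omega$-narrowness of $H$ yields a countable set $A\subseteq H$ with $H=A\oplus(U\cap H)$. Regarding $A$ as a countable subset of $G$, the goal reduces to proving $G=A\oplus V$.

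Now I would fix $g\in G$ and first locate a point of $H$ lying close to $g$ on the left. Using the cancellation laws $\ominus x\oplus(x\oplus b)=b$ and $x\oplus(\ominus x\oplus b)=b$, one sees that $g\in x\oplus U$ if and only if $\ominus x\oplus g\in U$; since the map $x\mapsto\ominus x\oplus g$ is continuous (inversion followed by right translation by $g$), the set $\{x\in G:\ominus x\oplus g\in U\}$ is open, and it contains $g$ because $\ominus g\oplus g=0\in U$. By density of $H$ I may choose $h$ in this set with $h\in H$, and put $u=\ominus h\oplus g\in U$, so that $g=h\oplus u$. Expanding $h$ through $H=A\oplus(U\cap H)$ as $h=a\oplus u'$ with $a\in A$ and $u'\in U\cap H$, I obtain $g=(a\oplus u')\oplus u$.

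The remaining and genuinely gyrogroup-specific step is to re-associate $(a\oplus u')\oplus u$ so as to pull $a$ to the outside, and this is exactly where the \emph{strong} hypothesis is indispensable. Since $U\in\mathscr U$ we have $\mbox{gyr}[a,u'](U)=U$, and as $\mbox{gyr}[a,u']$ is a bijection of $U$ onto $U$ its inverse satisfies $\mbox{gyr}[a,u']^{-1}(U)=U$; hence $z:=\mbox{gyr}[a,u']^{-1}(u)\in U$. Applying the gyroassociative law (G3) with this $z$ gives $a\oplus(u'\oplus z)=(a\oplus u')\oplus\mbox{gyr}[a,u'](z)=(a\oplus u')\oplus u=g$, and since $u',z\in U$ we conclude $g=a\oplus(u'\oplus z)\in a\oplus(U\oplus U)\subseteq a\oplus V\subseteq A\oplus V$. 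As $g$ was arbitrary this shows $G=A\oplus V$ with $A$ countable, proving $G$ is left $\omega$-narrow. I expect this re-association to be the main obstacle: in a topological group it is automatic from associativity, whereas here it succeeds only because $\mathscr U$ consists of gyration-invariant neighborhoods, which is precisely the extra strength that a strongly topological gyrogroup provides over a general topological gyrogroup.
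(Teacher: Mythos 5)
Your proof is correct and follows essentially the same route as the paper: cover the dense left $\omega$-narrow subgyrogroup by countably many left translates of a gyration-invariant $U\in\mathscr U$, then use gyroassociativity together with $\mbox{gyr}[a,u'](U)=U$ to re-associate and absorb the extra factor into $U\oplus U\subseteq V$. The only cosmetic difference is that you verify the density step pointwise (via the open set $\{x:\ominus x\oplus g\in U\}$), whereas the paper invokes the standard fact that $\overline{S}\subseteq S\oplus V$ for any neighborhood $V$ of $0$.
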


\begin{proof}
If $U$ is an open neighborhood of $0$ in $G$, we can choose $V\in \mathscr U$ such that $V\oplus V\subset U$. Since $H$ is left $\omega$-narrow, there is a countable subset $A$ of $H$ such that $H\subset A\oplus V$. Therefore, by \cite[Lemma 9]{AW}, we have $$G=\overline{H} \subset (A\oplus V)\oplus V=A\oplus (V\oplus gyr[V,A](V))=A\oplus (V\oplus V)\subset A\oplus U.$$ Therefore, $G=A\oplus U$ and $G$ is left $\omega$-narrow.
\end{proof}

In \cite{BL3}, the authors gave the following result.

\begin{proposition}\label{ktl1}\cite{BL3}
Every separable strongly topological gyrogroup $G$ is left $\omega$-narrow.
\end{proposition}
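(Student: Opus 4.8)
The plan is to let $D$ be a countable dense subset of $G$, which exists because $G$ is separable, and to show that $D$ itself witnesses left $\omega$-narrowness: for every open neighborhood $V$ of $0$ we will have $G=D\oplus V$. Since $D\oplus V\subseteq G$ is automatic, the whole task is the reverse inclusion, i.e.\ writing an arbitrary $g\in G$ as $d\oplus v$ with $d\in D$ and $v\in V$.

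First I would pass to the distinguished base, choosing $U\in\mathscr U$ with $U\subseteq V$; it then suffices to find, for each $g$, some $d\in D$ with $g\in d\oplus U$. The link between this algebraic condition and the topology is the gyrogroup identity $d\oplus(\ominus d\oplus g)=g$, which follows from gyroassociativity together with the fact that $\mbox{gyr}[d,\ominus d]$ is the identity automorphism (equivalently, that the left translation $x\mapsto d\oplus x$ is a bijection with inverse $x\mapsto\ominus d\oplus x$). Consequently $\ominus d\oplus g\in U$ already forces $g\in d\oplus U$.

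Next I would use continuity to locate such a $d$. Fix $g\in G$ and consider the continuous map $x\mapsto \ominus x\oplus g$, a composition of the inversion with $\oplus$; at $x=g$ it takes the value $\ominus g\oplus g=0\in U$, so there is an open neighborhood $O$ of $g$ whose image lies in $U$. By density of $D$ pick $d\in D\cap O$; then $\ominus d\oplus g\in U$, whence $g\in d\oplus U\subseteq d\oplus V$ by the identity above. As $g$ was arbitrary, $G\subseteq D\oplus V$, so $G=D\oplus V$ with $D$ countable, which is precisely left $\omega$-narrowness.

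I expect the one point requiring care to be the algebraic identity $d\oplus(\ominus d\oplus g)=g$: a priori the gyration $\mbox{gyr}[d,\ominus d]$ produced by gyroassociativity could obstruct the cancellation, and one must invoke the gyrogroup axioms (equivalently, that each left translation is a bijection) to see that it is trivial. The strongly topological hypothesis is what keeps the remaining bookkeeping clean, since every member of the base $\mathscr U$ is pointwise fixed by all gyrations; with a general neighborhood one would instead have to track images such as $\mbox{gyr}[x,y](V)$, whereas here these can be ignored throughout.
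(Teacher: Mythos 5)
Your proof is correct. Note first that the paper itself offers no proof of this proposition --- it is quoted from \cite{BL3} --- so there is nothing in-text to compare against; but your argument is sound and self-contained. The two ingredients both check out: the cancellation $d\oplus(\ominus d\oplus g)=g$ holds in every gyrogroup because $\mbox{gyr}[d,\ominus d]$ is the identity automorphism (a consequence of the left loop property (G4) together with $\mbox{gyr}[0,y]=\mbox{id}$), and the continuity of $x\mapsto\ominus x\oplus g$ at $x=g$, where it takes the value $0$, produces the neighborhood $O$ of $g$ you need before invoking density of $D$.

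One small correction to your closing remark: the strongly topological hypothesis is not merely a bookkeeping convenience here --- it is simply never used. The only gyration appearing in your argument is $\mbox{gyr}[d,\ominus d]=\mbox{id}$, which is trivial in any gyrogroup, and no image $\mbox{gyr}[x,y](V)$ of a neighborhood ever needs to be tracked. So your proof actually establishes the stronger statement that every separable topological gyrogroup is left $\omega$-narrow; the restriction to strongly topological gyrogroups in the statement reflects the context of \cite{BL3} rather than any necessity of the argument. (Even the reduction from $V$ to a basic $U\in\mathscr U$ is dispensable.)
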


Then, we will show that the left $\omega$-narrow strongly topological gyrogroups need not be separable, that is, there exists a left $\omega$-narrow strongly topological gyrogroup which is not separable.

\begin{example}
There exists a left $\omega$-narrow strongly topological gyrogroup which is not separable.
\end{example}

Let $X$ be an arbitrary left $\omega$-narrow strongly topological gyrogroup, and let $Y$ be an $\omega$-narrow topological group which has uncountable cellularity, i.e., there is an uncountable family of disjoint non-empty open subsets in $Y$ (such as \cite[Example 5.4.13]{AA}). Set $G=X\times Y$ with the product topology and the operation with coordinate. Then $G$ is a left $\omega$-narrow strongly topological gyrogroup by Proposition \ref{3mt2} and there is an uncountable family of disjoint non-empty open subsets in $G$.

\bigskip
Next, a family $\gamma$ of open sets in a space $X$ is called a {\it base} \cite{E} for $X$ at a set $F\subset X$ if all elements of $\gamma$ contains $F$ and, for each open set $V$ that contains $F$, there exists $U\in\gamma$ such that $U\subset V$. The {\it character} \cite{E} of $X$ at a set $F$ is the smallest cardinality of a base for $X$ at $F$. We recall the definition of the {\it feathered (strongly) topological gyrogroup}. A (strongly) topological gyrogroup $G$ is {\it feathered} if it contains a non-empty compact subset $K$ of countable character in $G$. In \cite{BL}, it was proved that a strongly topological gyrogroup $G$ is feathered if and only if it contains a compact $L$-subgyrogroup $H$ such that the quotient space $G/H$ is metrizable. Moreover, it was also proved in the same paper that every feathered strongly topological gyrogroup is paracompact and every feathered strongly topological gyrogroup is a $D$-space. Then, we will prove that the class of feathered strongly topological gyrogroups is closed under taking countable products. Moreover, we will give an example to show that the product of arbitrary family of feathered topological gyrogroups need not to be feathered.

\begin{lemma}\cite{E}\label{wdl}
If $A_{s}$ is a compact subspace of a topological space $X_{s}$ for $s\in S$, then for every open subset $W$ of the Cartesian product $\prod _{s\in S}X_{s}$ which contains the set $\prod _{s\in S}A_{s}$ there exist open sets $U_{s}\subset X_{s}$ such that $U_{s}\not =X_{s}$ only for finitely many $s\in S$ and $\prod _{s\in S}A_{s}\subset \prod _{s\in S}U_{s}\subset W$.
\end{lemma}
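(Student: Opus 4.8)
The plan is to reduce the infinite product to a finite one and then invoke the classical Wallace theorem for finitely many compact factors. First I would dispose of the degenerate case: if some $A_{s_0}=\emptyset$, then $\prod_{s\in S}A_s=\emptyset$, and taking $U_{s_0}=\emptyset$ together with $U_s=X_s$ for $s\neq s_0$ satisfies the conclusion at once. So from now on I may assume that every $A_s$ is nonempty, which guarantees that the projection of $\prod_{s\in S}A_s$ onto any subproduct is exactly the corresponding subproduct of the $A_s$.

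Next I would apply Tychonoff's theorem: since each $A_s$ is compact, the set $A:=\prod_{s\in S}A_s$ is a compact subspace of $\prod_{s\in S}X_s$. For every point $x\in A$ there is a basic open box $B_x=\prod_{s\in S}V^{x}_s\subseteq W$ containing $x$, where $V^{x}_s=X_s$ for all but finitely many $s$. Using compactness of $A$, I extract a finite subcover $B_{x_1},\dots,B_{x_n}$ of $A$, and I let $J\subseteq S$ be the \emph{finite} set of indices $s$ for which $V^{x_i}_s\neq X_s$ for at least one $i$. By construction each box $B_{x_i}$ constrains only coordinates lying in $J$, so $B_{x_i}=\pi_J^{-1}(C_i)$ for suitable open $C_i\subseteq\prod_{s\in J}X_s$, where $\pi_J$ is the projection onto the finite subproduct.

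Writing $C=C_1\cup\dots\cup C_n$, I obtain an open set $C$ in the \emph{finite} product $\prod_{s\in J}X_s$ with $\pi_J^{-1}(C)=B_{x_1}\cup\dots\cup B_{x_n}\supseteq A$ and $\pi_J^{-1}(C)\subseteq W$. Projecting and using that all $A_s$ are nonempty gives $\prod_{s\in J}A_s\subseteq C$. Now I would apply the Wallace theorem for finite products of compact sets (itself obtainable by iterating the tube lemma over the finitely many factors in $J$) to produce open sets $U_s\supseteq A_s$ for $s\in J$ with $\prod_{s\in J}U_s\subseteq C$. Finally, setting $U_s=X_s$ for $s\in S\setminus J$ yields $\prod_{s\in S}U_s=\pi_J^{-1}\bigl(\prod_{s\in J}U_s\bigr)\subseteq\pi_J^{-1}(C)\subseteq W$, while clearly $\prod_{s\in S}A_s\subseteq\prod_{s\in S}U_s$ and $U_s\neq X_s$ for only finitely many $s$, which is exactly the assertion.

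The main obstacle, and the reason a naive argument fails, is that a finite union of basic boxes is not itself a box, so one cannot merely pass to a finite subcover and read off a single product neighborhood. The device that overcomes this is the collapse to the finite subproduct indexed by $J$: there the union $C$ of the relevant open cylinders becomes an ordinary open set squeezed between $\prod_{s\in J}A_s$ and the image of $W$, and the finite Wallace theorem converts it back into product form. Two ingredients are genuinely used here, namely Tychonoff's theorem, to make the whole product compact so that a finite subcover exists, and the finite Wallace theorem, to recover a product neighborhood inside the finite-dimensional open set $C$.
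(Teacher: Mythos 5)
The paper offers no proof of this lemma at all---it is quoted from Engelking as the Wallace theorem for arbitrary products---so there is nothing internal to compare against; your argument is correct and is essentially the standard textbook proof: Tychonoff's theorem to make $\prod_{s\in S}A_{s}$ compact, a finite subcover by basic boxes to collapse everything onto a finite subproduct indexed by $J$, and the finite Wallace theorem to convert the resulting open set $C$ back into a product neighborhood. You also handle the two delicate points correctly, namely the case of an empty factor and the use of nonemptiness of every $A_{s}$ to identify $\pi_{J}\bigl(\prod_{s\in S}A_{s}\bigr)$ with $\prod_{s\in J}A_{s}$.
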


\begin{theorem}\label{dlc2}
The product $G=\prod _{n\in \omega}G_{n}$ of countably many feathered topological gyrogroups is a feathered topological gyrogroup.
\end{theorem}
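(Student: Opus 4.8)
The plan is to take the obvious candidate for the required compact set, namely the product $K=\prod_{n\in\omega}K_n$, where each $K_n$ is a non-empty compact subset of $G_n$ of countable character witnessing that $G_n$ is feathered. First I would note that $G=\prod_{n\in\omega}G_n$ is a topological gyrogroup by the same results cited in the proof of Proposition~\ref{3mt2} (Theorem 2.1 in \cite{ST2} together with Theorem 5 in \cite{AW}), and that $K$ is a non-empty compact subset of $G$ by the Tychonoff theorem. It then remains only to verify that $K$ has countable character in $G$.

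To produce a countable base for $G$ at $K$, for each $n\in\omega$ I would fix a countable base $\gamma_n=\{W_{n,k}:k\in\omega\}$ for $G_n$ at $K_n$, and form the family $\gamma$ consisting of all sets of the form
$$\prod_{n\in F}W_{n,k_n}\times\prod_{n\in\omega\setminus F}G_n,$$
where $F$ ranges over the finite subsets of $\omega$ and $W_{n,k_n}\in\gamma_n$ for each $n\in F$. Each such set is a basic open set of the product containing $K$. The countability of $\gamma$ is where the hypothesis of a \emph{countable} product is essential: the collection of finite subsets of $\omega$ is countable, and for each fixed finite $F$ the number of choices of the tuple $(W_{n,k_n})_{n\in F}$ is a finite product of countable sets, hence countable; thus $\gamma$ is a countable union of countable families.

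The heart of the argument is to show that $\gamma$ is actually a base for $G$ at $K$. Given an arbitrary open set $W$ of $G$ with $K\subset W$, I would invoke Lemma~\ref{wdl} with $A_n=K_n$ to obtain open sets $U_n\subset G_n$ such that $U_n\neq G_n$ for only finitely many $n$, say $n\in F$, and $\prod_{n\in\omega}K_n\subset\prod_{n\in\omega}U_n\subset W$. For each $n\in F$, since $\gamma_n$ is a base for $G_n$ at $K_n$ and $K_n\subset U_n$, there is some $W_{n,k_n}\in\gamma_n$ with $K_n\subset W_{n,k_n}\subset U_n$. The corresponding element $\prod_{n\in F}W_{n,k_n}\times\prod_{n\in\omega\setminus F}G_n$ of $\gamma$ then lies inside $\prod_{n\in\omega}U_n\subset W$ and contains $K$, which is exactly what a base at $K$ requires. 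Hence $K$ has countable character and $G$ is feathered.

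I expect the reduction via Lemma~\ref{wdl} to be the only non-routine step, since it is what lets us pass from an arbitrary neighborhood of the product set $K$ to a ``rectangular'' neighborhood controlled by finitely many coordinates; everything else is bookkeeping about countability, and it is precisely that bookkeeping which breaks down for uncountable index sets, explaining why the theorem is stated for countable products only.
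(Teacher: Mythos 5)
Your proposal is correct and follows essentially the same route as the paper: take $K=\prod_{n\in\omega}K_n$, use Tychonoff for compactness, and use Lemma~\ref{wdl} to reduce an arbitrary neighborhood of $K$ to a rectangular one controlled by finitely many coordinates, then replace those finitely many factors by members of the countable bases $\gamma_n$. The only cosmetic difference is that the paper indexes its countable base at $K$ by initial segments $\{0,\dots,k\}$ rather than arbitrary finite subsets of $\omega$.
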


\begin{proof}
Let $K_{n}$ be the non-empty compact subset of countable character of $G_{n}$ containing the identity element $0$ for every $n\in \omega$. Set $K=\prod _{n\in \omega}K_{n}$. It is clear that $K$ is compact by the Tychonoff Product Theorem. Let $\gamma _{n}$ be a countable base for $G_{n}$ at $K_{n}$, $n\in \omega$. We show that $$\mathscr B=\{\pi _{0}^{-1}(U_{0})\cap \ldots \cap \pi _{k}^{-1}(U_{k}):U_{0}\in \gamma _{0},\ldots ,U_{k}\in \gamma _{k},k\in \omega\}$$ is a base for $G$ at $K$, where $\pi _{i}:G\rightarrow G_{i}$ is the projection for each $i\in \omega$.

In fact, let $W$ be a neighborhood of $K$ in $G$. It follows from Lemma \ref{wdl} that there exist open sets $W_{n}\subset G_{n}$ such that $W_{n}\not =G_{n}$ for only finitely many $n\in \omega$ and $K\subset \prod _{n\in \omega}W_{n}\subset W$. We can find $k\in \omega$ such that $W_{n}=G_{n}$ for all $n>k$ and, for every $i\leq k$,
choose $U_{i}\in \gamma _{i}$ satisfying $U_{i}\subset W_{i}$. Then, $U=\pi _{0}^{-1}(U_{0})\cap \ldots \cap \pi _{k}^{-1}(U_{k})$ belongs to $\mathscr B$. Moreover, $K\subset U\subset W$. Hence, we have that $\chi (K,G)\leq |\mathscr B|\leq \omega$.
\end{proof}

\begin{theorem}
If a topological gyrogroup $H$ is a continuous homomorphic image of a feathered topological gyrogroup $G$, then $H$ is also feathered.
\end{theorem}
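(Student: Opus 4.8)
The plan is to show that the $f$-image of the witnessing compact set of $G$ is a nonempty compact subset of $H$ of countable character; by the definition of feathered this gives the conclusion. So let $f\colon G\to H$ be a continuous surjective homomorphism with $G$ feathered, and fix a nonempty compact $K\subseteq G$ with $\chi(K,G)\le\omega$. Since the left translations $x\mapsto a\oplus x$ are homeomorphisms of $G$ (with inverse $x\mapsto \ominus a\oplus x$, by the left cancellation law in gyrogroups), for any $a\in K$ the set $\ominus a\oplus K$ is again compact, of countable character, and contains $0$; replacing $K$ by it, I may assume $0\in K$. Then $f(K)$ is a nonempty compact subset of $H$ containing $f(0)=0$, so it remains only to prove $\chi(f(K),H)\le\omega$.

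To produce a countable base for $H$ at $f(K)$, I would first put the base in $G$ into translation form. Let $\{O_n:n\in\omega\}$ be a decreasing countable base for $G$ at $K$. By joint continuity of $\oplus$ and compactness of $K$ (the usual tube-lemma argument applied to $K=K\oplus 0\subseteq O_n$), for each $n$ I can choose a symmetric open neighborhood $U_n$ of $0$ with $K\oplus U_n\subseteq O_n$, and, intersecting, I may take the $U_n$ decreasing. Each $K\oplus U_n=\bigcup_{k\in K}(k\oplus U_n)$ is open, contains $K$, and lies in $O_n$, so $\{K\oplus U_n:n\in\omega\}$ is again a base for $G$ at $K$. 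Applying $f$ and using that it is a homomorphism, $f(K\oplus U_n)=f(K)\oplus f(U_n)$. Now for any open $W\subseteq H$ with $f(K)\subseteq W$, continuity makes $f^{-1}(W)$ open and $\supseteq K$, so $K\oplus U_n\subseteq f^{-1}(W)$ for some $n$, whence $f(K)\oplus f(U_n)=f(K\oplus U_n)\subseteq W$. Thus the countable family $\{f(K)\oplus f(U_n):n\in\omega\}$ lies cofinally below every neighborhood of $f(K)$, which is exactly the base property once each member is known to be a neighborhood of $f(K)$.

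The \emph{main obstacle} is precisely this last point: certifying that each $f(K)\oplus f(U_n)$ is a neighborhood of $f(K)$ in $H$. If one knows that $f(U_n)$ is a neighborhood of $0$ in $H$, then $V_n:=\operatorname{Int}_H f(U_n)$ is an open neighborhood of $0$, and $f(K)\oplus V_n=\bigcup_{k\in K}\big(f(k)\oplus V_n\big)$ is open (translations in $H$ being homeomorphisms), contains $f(K)$, and lies in $f(K)\oplus f(U_n)\subseteq W$; combined with the inclusions of the previous paragraph, $\{f(K)\oplus V_n:n\in\omega\}$ is then a countable base for $H$ at $f(K)$, so $H$ is feathered. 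Everything therefore reduces to the single assertion that $f$ sends the neighborhood base $\{U_n\}$ at $0$ in $G$ to neighborhoods of $0$ in $H$. This is the heart of the argument and the step where the interaction between the homomorphism $f$ and the two gyrogroup topologies must be exploited decisively; the remaining work above is routine, so I would concentrate the effort on transporting the neighborhood structure at $0$ through $f$.
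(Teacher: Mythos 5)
You have correctly identified the crux, but the step you defer to the end --- that $f$ carries the neighborhoods $U_{n}$ of $0$ (equivalently, the sets $K\oplus U_{n}$) to neighborhoods in $H$ --- is not a technicality awaiting a clever argument: it is false in general, and with it the theorem itself fails as stated. Continuity of a surjective homomorphism gives no control over images of open sets; that control is precisely what \emph{openness} of $f$ would supply. Concretely, let $G$ be the group $\mathbb{Z}^{\omega_{1}}$ with the \emph{discrete} topology (a topological group, hence a topological gyrogroup, and feathered because $\{0\}$ is compact and open), let $H$ be $\mathbb{Z}^{\omega_{1}}$ with the product topology, and let $f$ be the identity map. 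Then $f$ is a continuous bijective homomorphism, but $H$ is not feathered: as the paper itself observes in the example following the corollary on projections (citing Hewitt--Ross), $\mathbb{Z}^{m}$ is completely regular but not normal for uncountable $m$, whereas every feathered topological group is paracompact and hence normal. So no argument can close your gap; the statement needs an extra hypothesis, e.g.\ that $f$ is open (or perfect).

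For comparison, the paper's proof is essentially your argument minus the scruples: it pushes the compact set and the base forward and simply declares $\{f(U_{n}):n\in\omega\}$ to be a countable base for $H$ at $f(K)$, overlooking that members of a base at a set must be open (at the very least, neighborhoods), which images under a merely continuous map need not be. In other words, the paper silently commits exactly the error you refused to commit, and its theorem is false as stated by the counterexample above. The correct formulation assumes $f$ is an open continuous homomorphism; under that hypothesis both your argument (with $V_{n}=f(U_{n})$, now open) and the paper's go through verbatim, and openness does hold in the paper's only application of this theorem, namely the corollary that factors of a feathered product are feathered, since projections are open.
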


\begin{proof}
We assume that $f$ is a continuous homomorphism from a feathered topological gyrogroup $G$ onto a topological gyrogroup $H$. Since $G$ is feathered, there is a non-empty compact set $K$ of countable character contained in $G$. Let $\{U_{n}:n\in \omega\}$ be a countable base for $G$ at $K$. It is clear that $f(K)$ is a non-empty compact subset of $H$. We show that $f(K)$ is of countable character in $H$.

Suppose that $V$ is an arbitrary open neighborhood of $f(K)$ in $H$, then $f^{-1}(V)$ is an open neighborhood of $K$ in $G$. Moreover, $\{U_{n}:n\in \omega\}$ is a countable base at $K$, so there exists $n\in \omega$ such that $K\subset U_{n}\subset f^{-1}(V)$. Therefore, we have $f(K)\subset f(U_{n})\subset V$. Thus, $\{f(U_{n}):n\in \omega\}$ is a countable base at $f(K)$ in $H$ and $H$ is feathered.
\end{proof}

Since projection is an open continuous homomorphism, it is natural to have the following corollary.

\begin{corollary}\label{tl2}
If $G$ is a feathered topological gyrogroup and $G=\prod _{n\in \omega}G_{n}$, where $G_{n}$ is a topological gyrogroup for any $n\in \omega$, then $G_{n}$ is feathered for each $n\in \omega$.
\end{corollary}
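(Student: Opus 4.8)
The plan is to reduce the statement directly to the theorem immediately preceding it, which asserts that a continuous homomorphic image of a feathered topological gyrogroup is feathered. The key observation, already flagged in the remark, is that each coordinate projection $\pi_{n}\colon G=\prod_{m\in\omega}G_{m}\rightarrow G_{n}$ is a continuous, surjective homomorphism of topological gyrogroups; once this is in place the conclusion is immediate.

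Concretely, I would proceed as follows. First, I would record that $\pi_{n}$ is continuous: this is part of the defining property of the product topology on $\prod_{m\in\omega}G_{m}$. Second, I would verify that $\pi_{n}$ is a homomorphism of gyrogroups in the sense of the groupoid-homomorphism definition. Since the operation $\oplus$ on $G$ is defined coordinatewise, for any $x=(x_{m})_{m\in\omega}$ and $y=(y_{m})_{m\in\omega}$ in $G$ we have
$$\pi_{n}(x\oplus y)=(x\oplus y)_{n}=x_{n}\oplus_{n}y_{n}=\pi_{n}(x)\oplus_{n}\pi_{n}(y),$$
so $\pi_{n}$ respects the operation. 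Third, I would note that $\pi_{n}$ is surjective: given any $a\in G_{n}$, the point of $G$ whose $n$-th coordinate equals $a$ and whose remaining coordinates are the respective identity elements is mapped onto $a$ by $\pi_{n}$, whence $\pi_{n}(G)=G_{n}$.

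Having established that, for each fixed $n\in\omega$, the gyrogroup $G_{n}$ is a continuous homomorphic image of the feathered topological gyrogroup $G$, I would invoke the preceding theorem to conclude that $G_{n}$ is feathered. As $n$ was arbitrary, every factor $G_{n}$ is feathered, which is exactly the assertion. There is no genuine obstacle in this argument; the only point that warrants an explicit line is confirming that the coordinatewise operation really does make $\pi_{n}$ a gyrogroup homomorphism, but this is routine, so the corollary follows as a direct application of the theorem.
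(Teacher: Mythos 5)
Your argument is correct and is exactly the paper's intended proof: the paper derives the corollary from the preceding theorem by observing that each coordinate projection is a continuous (open) surjective homomorphism. Your explicit verification that $\pi_{n}$ respects the coordinatewise operation and is onto is a routine but welcome elaboration of the same reduction.
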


Then, we will show that the product of arbitrary family of feathered topological gyrogroups need not to be feathered.

\begin{example}
There is a topological gyrogroup which is the product of uncountable many feathered topological gyrogroups but not feathered.
\end{example}

Let $G=H\times Z^{m}$, where $H$ is a feathered strongly topological gyrogroup, $m$ is any uncountable cardinal number. It is clear that every locally compact strongly topological gyrogroup is feathered. Suppose on the contrary, if $G$ is feathered, it follows from Corollary \ref{tl2} that $Z^{m}$ is feathered. In \cite{BL}, it was proved that every feathered strongly topological gyrogroup is paracompact. However, $Z^{m}$ is a non-normal completely regular topological group (see \cite[Theorem 8.11]{EK}) which is contradict with the paracompactness.

\bigskip
In \cite{LF4}, F. Lin posed the following question.

\begin{question}\cite[Question 6.13]{LF4}
Is each first-countable left $\omega$-narrow rectifiable space $G$ separable?
\end{question}

It is well-known that every topological gyrogroup is rectifiable, so it is natural to pose the next question.

\begin{question}\label{wt1}
Is each first-countable left $\omega$-narrow topological gyrogroup $G$ separable? What if the topological gyrogroup is a strongly topological gyrogroup?
\end{question}

Next we prove that every first-countable left $\omega$-narrow strongly topological gyrogroup is separable, which gives an affirmative answer to Question \ref{wt1} when the topological gyrogroup is a strongly topological gyrogroup, see Corollary \ref{wtl1}.

\begin{theorem}\label{kdl3}
Let $(G,\tau ,\oplus)$ be a left $\omega$-narrow strongly topological gyrogroup with a symmetric open neighborhood base $\mathscr U$ at $0$. If $G$ is first-countable, then $G$ has a countable base.
\end{theorem}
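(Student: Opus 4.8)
The plan is to produce an explicit countable base. First I would pass from $\mathscr U$ to a convenient countable local base at $0$: since $G$ is first-countable, fix any countable base $\{O_{n}:n\in\omega\}$ at $0$ and, using that $\mathscr U$ is a base at $0$ and that $\oplus$ is continuous at $(0,0)$, recursively choose $V_{n}\in\mathscr U$ with $V_{n}\subseteq O_{n}$, $V_{n+1}\subseteq V_{n}$ and $V_{n+1}\oplus V_{n+1}\subseteq V_{n}$. Each $V_{n}$ is then symmetric and satisfies $\mbox{gyr}[x,y](V_{n})=V_{n}$ for all $x,y\in G$, and $\{V_{n}:n\in\omega\}$ is a countable base at $0$. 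For every $n$, left $\omega$-narrowness yields a countable $A_{n}\subseteq G$ with $G=A_{n}\oplus V_{n}$; put $A=\bigcup_{n\in\omega}A_{n}$, a countable set. Since every left translation $x\mapsto a\oplus x$ is a homeomorphism (its inverse is $x\mapsto\ominus a\oplus x$ by left cancellation), the family $\mathscr B=\{a\oplus V_{n}:a\in A,\ n\in\omega\}$ consists of open sets and is countable. I claim $\mathscr B$ is a base for $G$.

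To verify the claim, fix $x\in G$ and an open set $W$ with $x\in W$. Because $\{V_{n}\}$ is a base at $0$ and left translation by $x$ is a homeomorphism, $\{x\oplus V_{n}:n\in\omega\}$ is a base at $x$, so I may pick $n$ with $x\oplus V_{n}\subseteq W$. Applying $G=A_{n+1}\oplus V_{n+1}$, there is $a\in A_{n+1}\subseteq A$ with $x\in a\oplus V_{n+1}$, say $x=a\oplus v$ with $v\in V_{n+1}$. It then suffices to show $a\oplus V_{n+1}\subseteq x\oplus V_{n}$, for then $a\oplus V_{n+1}$ is a member of $\mathscr B$ containing $x$ and contained in $W$.

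The core computation is to move between $a$ and $x$ while controlling the gyrations that gyrogroup arithmetic introduces. From $x=a\oplus v$ and left cancellation, $\ominus a\oplus x=v$. Applying the gyrosum inversion law $\ominus(p\oplus q)=\mbox{gyr}[p,q](\ominus q\ominus p)$ with $p=\ominus a$ and $q=x$ gives $\ominus v=\ominus(\ominus a\oplus x)=\mbox{gyr}[\ominus a,x](\ominus x\oplus a)$, hence $\ominus x\oplus a=\mbox{gyr}[\ominus a,x]^{-1}(\ominus v)$. Since $V_{n+1}$ is symmetric, $\ominus v\in V_{n+1}$, and since $\mbox{gyr}[\ominus a,x](V_{n+1})=V_{n+1}$ its inverse automorphism also fixes $V_{n+1}$ setwise; therefore $\ominus x\oplus a\in V_{n+1}$, i.e. $a\in x\oplus V_{n+1}$, say $a=x\oplus s$ with $s\in V_{n+1}$. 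Now take any $y=a\oplus w\in a\oplus V_{n+1}$. Writing $y=(x\oplus s)\oplus w$ and using the gyroassociative law (G3), $y=x\oplus(s\oplus w')$ where $w'=\mbox{gyr}[x,s]^{-1}(w)\in V_{n+1}$ by gyration-invariance. Thus $s\oplus w'\in V_{n+1}\oplus V_{n+1}\subseteq V_{n}$, so $y\in x\oplus V_{n}\subseteq W$. This proves $a\oplus V_{n+1}\subseteq W$ and hence that $\mathscr B$ is a countable base.

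The only genuine obstacle is the bookkeeping of gyrations: in a general gyrogroup neither $\ominus(a\oplus b)=\ominus b\ominus a$ nor associativity holds, so each cancellation and each reassociation produces a stray factor $\mbox{gyr}[\cdot,\cdot]$. The strong hypothesis is tailored precisely to neutralize these factors, since every $V_{n}\in\mathscr U$ satisfies $\mbox{gyr}[x,y](V_{n})=V_{n}$; with this in hand the argument runs parallel to the classical proof that a first-countable $\omega$-narrow topological group is second-countable.
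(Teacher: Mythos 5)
Your proposal is correct and follows essentially the same route as the paper: translate a countable gyration-invariant local base at $0$ by the countable witnesses of left $\omega$-narrowness, show the chosen centre $a$ lies in $x\oplus V_{n+1}$, and absorb the stray gyrations using $\mbox{gyr}[\cdot,\cdot](V_{n+1})=V_{n+1}$ together with $V_{n+1}\oplus V_{n+1}\subseteq V_{n}$. The only cosmetic difference is that you derive $a\in x\oplus V_{n+1}$ via the gyrosum inversion law where the paper uses the direct identity $x=(x\oplus y)\oplus\mbox{gyr}[x,y](\ominus y)$; both are valid.
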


\begin{proof}
Let $\{U_{n}:n\in \omega\}$ be a countable base at the identity element $0$ of $G$, then there exists a countable base $\{V_{n}:n\in \omega\}$ at $0$ such that $V_{n}\in \mathscr U$. Since $G$ is left $\omega$-narrow, for every $V_{n}$, there exists a countable subset $A_{n}$ of $G$ such that $G=A_{n}\oplus V_{n}$. Set $\mathscr B=\{x\oplus V_{n}:x\in A_{n}\mbox{ and }n\in \omega\}$. Obviously, $\mathscr B$ is countable and we prove that $\mathscr B$ is a base for the gyrogroup $G$.

For an arbitrary open neighborhood $O$ of a point $a\in G$. It is clear that there are $k,l\in \omega$ such that $a\oplus V_{k}\subset O$ and $V_{l}\oplus V_{l}\subset V_{k}$. Therefore, there exists $x\in A_{l}$ such that $a\in x\oplus V_{l}$. Then there is a $y\in V_{l}$ such that $a=x\oplus y$. It follows that
\begin{eqnarray}
x&=&(x\oplus y)\oplus gyr[x,y](\ominus y)\nonumber\\
&=&a\oplus gyr[x,y](\ominus y)\nonumber\\
&\in &a\oplus gyr[x,y](V_{l})\nonumber\\
&=&a\oplus V_{l}.\nonumber
\end{eqnarray}
So, $x\oplus V_{l}\subset (a\oplus V_{l})\oplus V_{l}=a\oplus (V_{l}\oplus gyr[V_{l},a](V_{l}))=a\oplus (V_{l}\oplus V_{l})\subset a\oplus V_{k}\subset O$, that is, $x\oplus V_{l}$ is an open neighborhood of $a$ and $x\oplus V_{l}\subset O$.
\end{proof}

It follows from \cite{CZ} that every topological gyrogroup is first-countable if and only if it is metrizable. Moreover, it is well-known that the separability, the Lindel\"{o}f property and the second-countability are all equivalent with each other in metrizable spaces. Therefore, we have the following corollaries.

\begin{corollary}\label{wtl1}
Every first-countable left $\omega$-narrow strongly topological gyrogroup is separable.
\end{corollary}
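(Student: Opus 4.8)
The plan is to obtain this as an immediate consequence of Theorem~\ref{kdl3}. First I would note that, as observed just after Definition~\ref{d11}, any strongly topological gyrogroup admits a symmetric open neighborhood base $\mathscr U$ at $0$, so a first-countable left $\omega$-narrow strongly topological gyrogroup $G$ satisfies all the hypotheses of Theorem~\ref{kdl3}. That theorem then provides a countable base for $G$; in other words, $G$ is second-countable.

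The second and final step is the elementary fact that every second-countable space is separable: selecting one point from each non-empty member of a countable base yields a countable set that meets every non-empty open set, hence a countable dense subset. Therefore $G$ is separable. Alternatively, one may invoke \cite{CZ} to see that the first-countable $G$ is metrizable and then use that second-countability and separability agree on metrizable spaces; but this detour is unnecessary, since the implication ``second-countable $\Rightarrow$ separable'' holds with no further assumptions.

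I expect no genuine obstacle at this stage: the real work is carried out inside Theorem~\ref{kdl3}, where the left $\omega$-narrow coverings $G=A_{n}\oplus V_{n}$ are combined with the gyrogroup identity $x=(x\oplus y)\oplus gyr[x,y](\ominus y)$ and with the gyration-invariance of the $V_{n}$ to show that the countably many translates $x\oplus V_{n}$ form a base for $G$. Once such a countable base is in hand, the passage to separability is automatic, so the corollary reduces to recording this one standard implication.
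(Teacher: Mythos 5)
Your proposal is correct and follows essentially the same route as the paper: both deduce the corollary from Theorem~\ref{kdl3}, which supplies the countable base. The only (harmless) difference is that you pass from second-countability to separability directly, whereas the paper detours through the metrizability of first-countable topological gyrogroups and the equivalence of second-countability and separability in metrizable spaces; your shortcut is valid since second-countable implies separable in any space.
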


\begin{corollary}
Every first-countable left $\omega$-narrow strongly topological gyrogroup is Lindel\"{o}f.
\end{corollary}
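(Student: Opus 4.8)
The plan is to reduce the statement to Theorem \ref{kdl3} and then invoke the classical Lindel\"{o}f theorem. First I would check that the hypotheses of Theorem \ref{kdl3} are satisfied. Since $G$ is a strongly topological gyrogroup, the discussion following Definition \ref{d11} shows that $G$ admits a \emph{symmetric} open neighborhood base at $0$: starting from any base $\mathscr U_0$ of $0$ witnessing the strong condition, the family $\{U\cup(\ominus U):U\in\mathscr U_0\}$ is again a neighborhood base at $0$, each of whose members is symmetric and is fixed by every gyration $\mathrm{gyr}[x,y]$. Together with the standing assumptions that $G$ is left $\omega$-narrow and first-countable, this means all hypotheses of Theorem \ref{kdl3} hold.

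Next I would apply Theorem \ref{kdl3} directly to $G$ equipped with this symmetric base, obtaining a countable base $\mathscr B$ for the topology of $G$; in other words, $G$ is second-countable.

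Finally I would appeal to the fact that every second-countable space is Lindel\"{o}f. Concretely, given an open cover $\mathcal O$ of $G$, for each $x\in G$ fix some $O_x\in\mathcal O$ with $x\in O_x$ and then choose a basic set $B_x\in\mathscr B$ with $x\in B_x\subset O_x$; the family $\{B_x:x\in G\}$ is a subfamily of the countable family $\mathscr B$, hence is countable, and assigning to each such basic set a single member of $\mathcal O$ containing it produces a countable subcover of $\mathcal O$. Equivalently, since $G$ is a first-countable topological gyrogroup it is metrizable by the result of \cite{CZ} quoted above, so one may instead invoke the stated equivalence of second-countability and the Lindel\"{o}f property in metrizable spaces. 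Either way, the substantive work is entirely contained in Theorem \ref{kdl3}, and I do not expect any genuine obstacle here beyond the routine verification that its hypotheses are met; the only point requiring a moment's care is the reduction to a symmetric base, which is handled by the remark after Definition \ref{d11}.
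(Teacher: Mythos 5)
Your proposal is correct and follows essentially the same route as the paper: both reduce the statement to Theorem \ref{kdl3}, which supplies a countable base, and then pass from second-countability to the Lindel\"{o}f property. Your direct argument that any second-countable space is Lindel\"{o}f is in fact a slight simplification, since it avoids the paper's detour through the metrizability result of \cite{CZ}, but the substantive content (Theorem \ref{kdl3} plus the symmetric-base reduction after Definition \ref{d11}) is identical.
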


\section{Separability of strongly topological gyrogroups}

In this section, we will study some properties about separabilities of strongly topological gyrogroups. In particular, we prove that if a feathered strongly topological gyrogroup is isomorphic to a subgyrogroup of a separable strongly topological gyrogroup, then it is separable. After that, if a metrizable strongly topological gyrogroup $G$ is isomorphic to a subgyrogroup of a strongly topological gyrogroup with countable cellularity, then $G$ is separable. And if a locally compact strongly topological gyrogroup $G$ is isomorphic to a subgyrogroup of a separable strongly topological gyrogroup, then $G$ is separable.

First, we recall the concept of the coset space of a topological gyrogroup.

Let $(G, \tau, \oplus)$ be a topological gyrogroup and $H$ an $L$-subgyrogroup of $G$. It follows from \cite[Theorem 20]{ST} that $G/H=\{a\oplus H:a\in G\}$ is a partition of $G$. We denote by $\pi$ the mapping $a\mapsto a\oplus H$ from $G$ onto $G/H$. Clearly, for each $a\in G$, we have $\pi^{-1}\{\pi(a)\}=a\oplus H$.
Denote by $\tau (G)$ the topology of $G$. In the set $G/H$, we define a family $\tau (G/H)$ of subsets as follows: $$\tau (G/H)=\{O\subset G/H: \pi^{-1}(O)\in \tau (G)\}.$$

\begin{theorem}\label{kdl1}
Suppose that $G$ is a topological gyrogroup and $H$ is a closed $L$-subgyrogroup of $G$. If the spaces $H$ and $G/H$ are both separable, we obtain that the space $G$ is also separable.
\end{theorem}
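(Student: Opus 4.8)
The plan is to mimic the classical topological‑group argument that separability is a three‑space property, adapting it to the gyrogroup setting via the quotient map $\pi:G\to G/H$. Let $D_{1}$ be a countable dense subset of the subgyrogroup $H$ and let $D_{2}$ be a countable dense subset of the coset space $G/H$. For each coset in $D_{2}$ I would pick a representative point in $G$, obtaining a countable set $\{a_{n}:n\in\omega\}\subset G$ with $\pi(a_{n})$ ranging over $D_{2}$. The natural candidate for a countable dense subset of $G$ is then $D=\{a_{n}\oplus d:n\in\omega,\ d\in D_{1}\}=\bigcup_{n}(a_{n}\oplus D_{1})$, which is countable as a countable union of countable sets. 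The whole proof reduces to showing that $D$ is dense in $G$.

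To verify density, I would take an arbitrary nonempty open $O\subset G$ and a point $x\in O$, and produce a point of $D$ inside $O$. First I would shrink: choose a symmetric $V\in\mathscr U$ (or, if only the topological‑gyrogroup hypothesis is available, an open neighborhood of $0$) with $(x\oplus V)\oplus V\subset O$, using joint continuity of $\oplus$ together with the gyroassociative identity (G3) to control the nesting. Next I would pass to the quotient: since $\pi$ is continuous and open (the openness of $\pi$ for an $L$‑subgyrogroup is exactly what makes $\tau(G/H)$ well behaved, and is used implicitly whenever one works with $G/H$), the image $\pi(x\oplus V)$ is an open neighborhood of $\pi(x)$ in $G/H$. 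By density of $D_{2}$ there is some $a_{n}$ with $\pi(a_{n})\in\pi(x\oplus V)$, i.e. $a_{n}\oplus H$ meets $x\oplus V$; equivalently there is $h\in H$ with $a_{n}\oplus h\in x\oplus V$. Then I would use density of $D_{1}$ in $H$ to replace $h$ by a nearby $d\in D_{1}$, so that $a_{n}\oplus d$ lies in $(x\oplus V)\oplus V\subset O$, giving the desired point $a_{n}\oplus d\in D\cap O$.

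The delicate point, and the step I expect to be the main obstacle, is the final approximation: moving from $a_{n}\oplus h$ to $a_{n}\oplus d$ requires that left translation by $a_{n}$, and more precisely the map $h\mapsto a_{n}\oplus h$, carries a neighborhood of $h$ in $H$ into a prescribed neighborhood of $a_{n}\oplus h$. In a topological group this is trivial, but in a gyrogroup the left translations $L_{a_{n}}$ are homeomorphisms of $G$, so continuity at $h$ is available; the care is in tracking the gyrations. Concretely, when I write $a_{n}\oplus h\in x\oplus V$ and want $a_{n}\oplus d\in (x\oplus V)\oplus V$, I must expand $a_{n}\oplus d=a_{n}\oplus(h\oplus(\ominus h\oplus d))$ and apply (G3) to get $(a_{n}\oplus h)\oplus\mathrm{gyr}[a_{n},h](\ominus h\oplus d)$; here the strongly‑topological hypothesis is exactly what lets me absorb the gyration, since $\mathrm{gyr}[a_{n},h](V)=V$ for $V\in\mathscr U$, so that $\ominus h\oplus d\in V$ forces the gyrated element back into $V$. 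Thus by choosing $d\in D_{1}$ with $\ominus h\oplus d\in V$ (possible since $h\oplus V$ is a neighborhood of $h$ and $D_{1}$ is dense in $H$) I land in $(x\oplus V)\oplus V\subset O$.

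I would organize the write‑up as: (i) set up $D_{1}$, $D_{2}$, the representatives $a_{n}$, and $D$; (ii) reduce density of $D$ to the neighborhood‑chasing claim; (iii) perform the quotient step using openness and continuity of $\pi$; and (iv) perform the approximation step, where the gyroassociative law and the gyration‑invariance of $\mathscr U$ do the real work. The only genuinely nontrivial ingredient beyond bookkeeping is the invariance $\mathrm{gyr}[a_{n},h](V)=V$, which is precisely why the hypothesis that $G$ be a \emph{strongly} topological gyrogroup enters, even though the statement as phrased asks only for a topological gyrogroup.
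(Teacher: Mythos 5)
Your construction of the candidate dense set is exactly the paper's: enumerate a countable dense set of $G/H$, seed each of the corresponding cosets with a countable dense subset obtained from the separability of $H$, and reduce density in $G$ to the openness of $\pi$ together with a fiberwise density check. Your quotient step (iii) is also the paper's argument. The problem is step (iv), which you yourself flag as the delicate point: you claim that passing from $a_{n}\oplus h$ to a nearby $a_{n}\oplus d$ with $d\in D_{1}$ requires absorbing $\mathrm{gyr}[a_{n},h]$ via the invariance $\mathrm{gyr}[a_{n},h](V)=V$, and you conclude that the \emph{strongly} topological hypothesis is genuinely needed. That conclusion is wrong, and accepting it would only prove a strictly weaker statement than the theorem, which is asserted for arbitrary topological gyrogroups.

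No gyration bookkeeping is needed because in any topological gyrogroup the left translation $L_{a_{n}}\colon z\mapsto a_{n}\oplus z$ is a homeomorphism of $G$ onto itself (it is a bijection with inverse $L_{\ominus a_{n}}$ by the left cancellation law, and both maps are continuous), and it carries $H$ onto the coset $a_{n}\oplus H$. Hence $a_{n}\oplus D_{1}$ is dense in $a_{n}\oplus H$ for free: if $a_{n}\oplus h\in O$ with $O$ open in $G$, then $L_{a_{n}}^{-1}(O)\cap H$ is a nonempty open subset of $H$, so it meets $D_{1}$, and any $d$ in the intersection satisfies $a_{n}\oplus d\in O$. No symmetric $V$, no iterated neighborhood $(x\oplus V)\oplus V$, and no appeal to $\mathrm{gyr}[a_{n},h](V)=V$ are required. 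This is precisely how the paper argues: each coset $x\oplus H$ is homeomorphic to $H$ and therefore separable, the union $M$ of countable dense subsets of the fibers over a countable dense $A\subset G/H$ is dense in $\pi^{-1}(A)$, and $\pi^{-1}(A)$ is dense in $G$ because $\pi$ is continuous and open. With your step (iv) replaced by this observation, your proof is correct and coincides with the paper's; as written, it establishes the result only for strongly topological gyrogroups.
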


\begin{proof}
We suppose that $\pi $ is the natural homomorphism of $G$ onto the quotient space $G/H$. From the separability of $G/H$, it follows that there exists a dense countable subset $A$ of $G/H$. Moreover, $H$ is separable and every coset $x\oplus H$ is homeomorphism to $H$, so there is a dense countable subset $M_{y}$ of $\pi ^{-1}(y)$, for each $y\in A$. Set $M=\bigcup \{M_{y}: y\in A\}$. It is obvious that $M$ is a countable subset of $G$ and $M$ is dense in $\pi ^{-1}(A)$. Furthermore, $\pi$ is an open mapping of $G$ onto $G/H$ by \cite[Theorem 3.7]{BL}, and it follows that $\overline{\pi ^{-1}(A)}=G$. Therefore, $M$ is dense in $G$ and $G$ is separable.
\end{proof}

In \cite{CZ}, Z. Cai, S. Lin and W. He proved that every topological gyrogroup is a rectifiable space, which deduced that the first-countability and metrizability are equivalent in topological gyrogroups. Moreover, it is well-known that separability is equivalent with the second-countability in a metrizable space. Therefore, if we can prove that the first-countability has the property like Theorem \ref{kdl1}, it is natural that the second-countability has the same property.

\begin{lemma}\label{3t2}
Suppose that $G$ is a topological gyrogroup, $H$ is a closed $L$-subgyrogroup of $G$, $X$ is a subspace of $G$, $\pi$ is the natural homomorphism of $G$ onto the quotient space $G/H$, and $Y=\pi (X)$. Suppose that the space $H$ and the subspace $Y$ of $G/H$ are first-countable. Then $X$ is also first-countable.
\end{lemma}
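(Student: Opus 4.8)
The plan is to verify first-countability pointwise. Fix an arbitrary $x\in X$, put $y=\pi(x)\in Y$, and construct a countable base of neighbourhoods of $x$ in $X$. Two countable ingredients are at hand. Since $H$ is first-countable, fix a countable base $\{W_{n}:n\in\omega\}$ of open neighbourhoods of $0$ in $H$; because the left translation $a\mapsto x\oplus a$ is a homeomorphism of $G$ (with inverse $a\mapsto\ominus x\oplus a$, by the left cancellation law) carrying $H$ onto the coset $x\oplus H=\pi^{-1}(y)$, the ``fibre direction'' at $x$ is countably generated. Since $Y$ is first-countable, fix a countable base $\{O_{m}:m\in\omega\}$ at $y$ in $Y$ and open sets $\widetilde{O}_{m}$ in $G/H$ with $O_{m}=\widetilde{O}_{m}\cap Y$; recall from \cite[Theorem 3.7]{BL} that $\pi$ is open, so pulling the $\widetilde{O}_{m}$ back by $\pi$ controls the ``coset direction''. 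Finally choose, recursively and by joint continuity of $\oplus$ and continuity of $\ominus$, symmetric open sets $\widetilde{W}_{n}$ in $G$ with $\widetilde{W}_{n+1}\oplus\widetilde{W}_{n+1}\subseteq\widetilde{W}_{n}$ and $\widetilde{W}_{n}\cap H\subseteq W_{n}$, so that $\{\widetilde{W}_{n}\cap H:n\in\omega\}$ is again a base at $0$ in $H$.

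I would then propose the countable family $B_{n,m}=(x\oplus\widetilde{W}_{n})\cap\pi^{-1}(\widetilde{O}_{m})\cap X$ of open neighbourhoods of $x$ in $X$ and try to show it is a base at $x$. Given an arbitrary open $U\ni x$ in $G$, joint continuity of $\oplus$ provides a symmetric open $V\ni 0$ with $x\oplus(V\oplus V)\subseteq U$; since $\{\widetilde{W}_{n}\cap H\}$ is a base at $0$ in $H$, pick $t$ with $\widetilde{W}_{t}\cap H\subseteq V$; and, using that $\pi$ is open, the set $\pi(x\oplus(V\cap\widetilde{W}_{t+1}))$ is an open neighbourhood of $y$, so its trace on $Y$ contains some $O_{m}$. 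Now take any $g\in B_{t+1,m}$, say $g=x\oplus v$ with $v\in\widetilde{W}_{t+1}$. From $\pi(g)\in O_{m}\subseteq\pi(x\oplus(V\cap\widetilde{W}_{t+1}))$ and openness of $\pi$ I obtain a lift $a=x\oplus b$ with $b\in V\cap\widetilde{W}_{t+1}$ and $\pi(a)=\pi(g)$. As $a$ and $g$ lie in one coset, left cancellation gives $h:=\ominus a\oplus g\in H$ and $g=a\oplus h$; moreover $c:=\ominus b\oplus v=\mbox{gyr}[b,x](h)$ satisfies $c\in\widetilde{W}_{t+1}\oplus\widetilde{W}_{t+1}\subseteq\widetilde{W}_{t}$ and $v=b\oplus c$.

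The main obstacle is to convert the two available facts, $c\in\widetilde{W}_{t}$ and $\mbox{gyr}[x,b](c)=h\in H$, into the smallness $c\in V$ that finishes the argument: once $c\in V$ we get $v=b\oplus c\in V\oplus V$ and hence $g=x\oplus v\in x\oplus(V\oplus V)\subseteq U$, so $B_{t+1,m}\subseteq U\cap X$ and $\{B_{n,m}\}$ is the desired base. In a topological group the gyration is trivial, $c=h\in\widetilde{W}_{t}\cap H\subseteq V$, and there is nothing to prove; the genuine difficulty in the gyrogroup setting is that $\mbox{gyr}[x,b]$ twists the neighbourhood $\widetilde{W}_{t}$, and a topological gyrogroup need not possess gyration-invariant neighbourhoods. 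My plan to handle this is to exploit the joint continuity of the gyration map $(u,w,z)\mapsto\mbox{gyr}[u,w](z)$ (a consequence of the continuity of $\oplus$ and $\ominus$) together with $\mbox{gyr}[x,0]=\mbox{id}$: since the lift $a$ may be required to lie in an arbitrarily small neighbourhood of $x$, the element $b$ is small and $\mbox{gyr}[x,b]$ is close to the identity, while the $L$-subgyrogroup hypothesis keeps $h=\mbox{gyr}[x,b](c)$ inside $H$; combining these should force $h$ into the prescribed small trace and, transporting back through $\mbox{gyr}[b,x]$, force $c$ into $V$. I expect this absorption of the gyration, reconciling the fibre base of $H$ with the coset base of $Y$ carried through the open map $\pi$, to be the technical heart of the proof.
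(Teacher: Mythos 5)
Your setup is sound and you have located the real difficulty precisely, but the proof is not complete: the step you defer --- deducing $c\in V$ from $c\in\widetilde{W}_{t}$ and $\mbox{gyr}[x,b](c)=h\in H$ --- is exactly where the argument has to close, and the route you sketch does not obviously close it. Joint continuity of $(b,c)\mapsto\mbox{gyr}[x,b](c)$ at $(0,c_{0})$ controls the gyration only on some neighbourhood of each fixed $c_{0}$; to run your plan you would need the estimate uniformly over all $c$ ranging in the fixed neighbourhood $\widetilde{W}_{t}$, i.e.\ an equicontinuity of the family $\{\mbox{gyr}[x,b]:b\ \mbox{small}\}$ on $\widetilde{W}_{t}$, and nothing in the hypotheses of the lemma (a general topological gyrogroup, no gyration-invariant base) supplies that. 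So as written this is a genuine gap, not an omitted routine verification.

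The paper's proof dissolves the gyration problem rather than solving it, by two moves you did not make. First, it reduces to the case $x=0$ (left translation by $\ominus x$ is a homeomorphism of $G$), so the candidate base is simply $B_{i,j}=W_{i}\cap U_{j}\cap X$, where the $W_{i}$ are symmetric with $W_{i+1}\oplus W_{i+1}\subset W_{i}$, $\{W_{i}\cap H\}$ is a base of $H$ at $0$, and $\{\pi(U_{j})\cap Y\}$ is a base of $Y$ at $\pi(0)$. Second --- and this is the key point --- it never parametrizes the fibre component through the gyroassociative law. Given $z\in B_{m+1,k}$ with $\pi(U_{k})\cap Y\subset\pi(V\cap W_{m+1})$, write $z=a\oplus h$ with $a\in V\cap W_{m+1}$ and $h\in H$; then by plain left cancellation $h=\ominus a\oplus z\in W_{m+1}\oplus W_{m+1}\subset W_{m}$, because both $\ominus a$ and $z$ themselves lie in the symmetric set $W_{m+1}$ (equivalently: a point of $W_{m+1}$ cannot lie in $W_{m+1}\oplus(G\setminus W_{m})$). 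Hence $h\in H\cap W_{m}\subset V$ and $z=a\oplus h\in V\oplus V\subset O$, with no gyration ever appearing. In your coordinates the gyration enters only because you factor $v=\ominus x\oplus g$ instead of estimating $h=\ominus a\oplus g$ directly, and the direct estimate is only useful when $a$ and $g$ are themselves small, i.e.\ after translating $x$ to $0$. Adopt the reduction to $0$ together with this cancellation trick and your argument closes immediately.
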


\begin{proof}
Without loss of generality, we may assume that $0\in X$. Then we need to verify that $X$ is first-countable at $0$. Take a sequence of symmetric open neighborhoods $W_{n}$ of $0$ in $G$ such that $W_{n+1}\oplus W_{n+1}\subset W_{n}$, for each $n\in \omega$, and $\{W_{n}\cap H:n\in \omega\}$ is a base for the space $H$ at $0$. We also take a sequence of open neighborhoods $U_{n}$ of $0$ in $G$ such that $\{\pi (U_{n})\cap Y:n\in \omega\}$ is a base for $Y$ at $\pi (0)$. Then set $B_{i,j}=W_{i}\cap U_{j}\cap X$, for $i,j\in \omega$.

{\bf Claim:} $\eta =\{B_{i,j}:i,j\in \omega \}$ is a base for $X$ at $0$.

It is obvious that $B_{i,j}$ is open in $X$ and $0\in B_{i,j}$. For an arbitrary open neighborhood $O$ of $0$ in $G$, we can find an open neighborhood $V$ of $0$ in $G$ such that $V\oplus V\subset O$. Fix $m\in \omega$ such that $W_{m}\cap H\subset V$. Moreover, we can find $k\in \omega$ such that $\pi (U_{k})\cap Y\subset \pi (V\cap W_{m+1})$. We show that $B_{m+1,k}\subset O$.

For each $z\in B_{m+1,k}=W_{m+1}\cap U_{k}\cap X$, it follows from $\pi (z)\in \pi (U_{k})\cap Y\subset \pi (V\cap W_{m+1})$ that $z\in U_{k}\cap X\subset (V\cap W_{m+1})\oplus H$. However, $W_{m+1}\oplus W_{m+1}\subset W_{m}$ and $z\in W_{m+1}=(\ominus W_{m+1})$, so $z\not\in W_{m+1}\oplus (G\backslash W_{m})$. Therefore, $z\in (V\cap W_{m+1})\oplus (H\cap W_{m})$. Moreover, $H\cap W_{m}\subset V$, and we have $z\in V\oplus V\subset O$. Hence, $B_{m+1,k}\subset O$ and $\eta$ is a base for $X$ at $0$. It follows from the countability of $\eta$ that $X$ is first-countable at $0$.
\end{proof}

By Lemma \ref{3t2} and Theorem \ref{kdl1}, it is obvious that we have the following results.

\begin{corollary}
Assume that $G$ is a topological gyrogroup and $H$ is a closed $L$-subgyrogroup of $G$. If the space $H$ and $G/H$ are first-countable (metrizable), then the space $G$ is also first-countable (metrizable).
\end{corollary}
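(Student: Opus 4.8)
The plan is to obtain both assertions as immediate specializations of Lemma~\ref{3t2}, combined with the equivalence of first-countability and metrizability for topological gyrogroups recorded in \cite{CZ}. The key observation is that Lemma~\ref{3t2} is stated for an \emph{arbitrary} subspace $X$ of $G$, so the natural move is simply to take $X=G$. Then $Y=\pi(X)=\pi(G)=G/H$, and the hypotheses of the lemma collapse exactly onto the hypotheses of the corollary. Thus no new construction is required; the work is entirely inherited from the lemma.

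For the first-countable case I would invoke Lemma~\ref{3t2} with $X=G$ and $Y=G/H$. Assuming, as in the lemma, that $0\in X$ (which holds since $0\in G$), the lemma produces a countable base for $G$ at $0$, i.e.\ $G$ is first-countable at the identity. Since $G$ is a topological gyrogroup, the left translations $x\mapsto a\oplus x$ are homeomorphisms, so $G$ is homogeneous (indeed it is a rectifiable space by \cite{CZ}); hence first-countability at the single point $0$ propagates to every point of $G$, and $G$ is first-countable. For the metrizable case I would note that metrizability implies first-countability, so if $H$ and $G/H$ are metrizable they are first-countable, whence by the previous paragraph $G$ is first-countable. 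Because $G$ is a topological gyrogroup, the equivalence quoted from \cite{CZ} then upgrades first-countability to metrizability of $G$, completing the proof.

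I do not expect a genuine obstacle at this stage: all the substantive analysis—the coordination of the base $\{W_n\cap H\}$ at $0$ in $H$ with the base $\{\pi(U_n)\cap Y\}$ at $\pi(0)$ in $Y$, and the cancellation argument showing $B_{m+1,k}\subset O$—is already discharged inside Lemma~\ref{3t2}. The only point deserving a word of care is the passage from first-countability at $0$ to first-countability of the whole space $G$, which is legitimate here precisely because $X=G$ is homogeneous under left translation, and the analogous remark applies to the metrizable conclusion through the \cite{CZ} equivalence.
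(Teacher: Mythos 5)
Your proposal is correct and follows exactly the route the paper intends: the paper gives no explicit proof, merely remarking that the corollary is immediate from Lemma~\ref{3t2} (applied, as you do, with $X=G$, so $Y=G/H$) together with the equivalence of first-countability and metrizability for topological gyrogroups from \cite{CZ}, which the paper records in the paragraph just before the corollary. Your added care about propagating first-countability from $0$ to all of $G$ via left translations is a legitimate filling-in of a detail the paper leaves implicit.
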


\begin{corollary}
Assume that $G$ is a topological gyrogroup, and $H$ is a second-countable closed $L$-subgyrogroup of $G$. If the quotient space $G/H$ is second-countable, then $G$ is also second-countable.
\end{corollary}

A family $\mathcal{N}$ of subsets of a topological space $Y$ is called a {\it network} \cite{E} for $Y$ if for every point $y\in Y$ and any neighborhood $U$ of $y$ there exists a set $F\in \mathcal{N}$ such that $y\in F\subset U$. The {\it network weight nw(Y)} \cite{E} of a space $Y$ is defined as the smallest cardinal number of the form $|\mathcal{N}|$, where $\mathcal{N}$ is a network for $Y$.

Then, we show the main results in this section. First, we need to introduce some lemmas.

\begin{lemma}\cite{LMT}\label{kyl1}
If $L$ is a Lindel\"{o}f subspace of a separable Hausdorff space $X$, then $nw(L)\leq \mathbf{c}$. Hence every compact subspace $K$ of a separable Hausdorff space satisfies $w(K)\leq \mathbf{c}$.
\end{lemma}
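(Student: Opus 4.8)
The lemma is classical (it is quoted here from \cite{LMT}), so the plan is to reconstruct a short self-contained argument. The guiding observation is that $X$ is only assumed to be \emph{Hausdorff}, not regular: if $X$ were regular one could build a network for $L$ of size $\le\mathbf c$ directly from the closures $\overline{B}$ of subsets $B$ of a countable dense set, with no appeal to the Lindel\"of property at all. It is precisely the absence of regularity that forces the Lindel\"of hypothesis on $L$ into the proof. I would first establish the network bound $nw(L)\le\mathbf c$, and then deduce the weight bound for compact $K$ by reduction to it.

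For the first assertion, I would fix a countable dense set $D=\{d_n:n\in\omega\}$ in $X$. Let $y\in L$ and let $V$ be open in $X$ with $y\in V$. The set $L\setminus V$ is closed in $L$, hence Lindel\"of. For each $z\in L\setminus V$ (so $z\neq y$) I would use the Hausdorff property to pick disjoint open sets $O_z\ni y$ and $U_z\ni z$; since $O_z$ is an open neighborhood of $y$ disjoint from $U_z$, we get $y\notin\overline{U_z}$. The family $\{U_z\cap L:z\in L\setminus V\}$ covers $L\setminus V$, so by the Lindel\"of property there is a countable subcover indexed by a sequence $(z_n)_{n\in\omega}$. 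I would then code each closure by the dense set: put $B_n=U_{z_n}\cap D\subseteq D$, so that density gives $\overline{U_{z_n}}=\overline{B_n}$, whence $y\notin\overline{B_n}$ and $U_{z_n}\subseteq\overline{B_n}$ for every $n$, and therefore $L\setminus V\subseteq\bigcup_n\overline{B_n}$. Setting $N=L\setminus\bigcup_{n}\overline{B_n}$ gives $y\in N\subseteq V\cap L$. Consequently
\[
\mathcal N=\Big\{\,L\setminus\bigcup_{n\in\omega}\overline{B_n}:(B_n)_{n\in\omega}\in(\mathcal P(D))^{\omega}\,\Big\}
\]
is a network for $L$, and since $|(\mathcal P(D))^{\omega}|=(2^{\aleph_0})^{\aleph_0}=2^{\aleph_0}=\mathbf c$, we obtain $nw(L)\le\mathbf c$.

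For the second assertion, a compact space is Lindel\"of, so the first part already yields $nw(K)\le\mathbf c$. It then remains to invoke the classical identity $w(K)=nw(K)$ valid for every compact Hausdorff space: using the normality of $K$, for each pair $(P,Q)$ of members of a network with $\overline P\cap\overline Q=\emptyset$ one separates $\overline P$ and $\overline Q$ by disjoint open sets, and checks by compactness that the resulting family of open sets (of cardinality $\le nw(K)$) is a subbase, so that its finite intersections form a base of size $\le nw(K)$. This gives $w(K)=nw(K)\le\mathbf c$.

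The main obstacle is exactly the merely Hausdorff setting of the first part: without regularity one cannot trap $y$ inside $V$ by a single closure $\overline{B}$ with $B\subseteq D$, and the role of the Lindel\"of hypothesis is to replace the (possibly more than $\mathbf c$ many) Hausdorff separations of $y$ from the points of $L\setminus V$ by a countable family, which can then be named by subsets of the countable dense set $D$ so as to keep the network within cardinality $\mathbf c$. The two points requiring genuine care are the verification of $y\in N\subseteq V\cap L$ and the cardinality count $|(\mathcal P(D))^{\omega}|=\mathbf c$; the passage from $nw(K)$ to $w(K)$ in the second part is standard once compactness is available.
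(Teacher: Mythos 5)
The paper offers no proof of this lemma; it is quoted directly from \cite{LMT}, so there is nothing internal to compare against. Your reconstruction is correct and is essentially the argument of the cited source: the network consisting of the sets $L\setminus\bigcup_{n}\overline{B_n}$ with each $B_n$ a subset of a countable dense set $D$ is exactly the standard device, the verification that $y\in N\subseteq V\cap L$ and the count $\bigl|(\mathcal P(D))^{\omega}\bigr|=\mathbf c$ are both right, and the passage from $nw(K)\leq\mathbf c$ to $w(K)\leq\mathbf c$ via Arhangel'ski\v\i's identity $w(K)=nw(K)$ for compact Hausdorff spaces is the standard reduction.
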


\begin{proposition}\label{kmt1}\cite{BL3}
If $(G,\tau ,\oplus)$ is a left $\kappa$-bounded strongly topological gyrogroup with a symmetric open neighborhood base $\mathscr U$ at $0$ and $H$ is a subgyrogroup of $G$, then $H$ is also left $\kappa$-bounded.
\end{proposition}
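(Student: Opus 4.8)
The plan is to adapt the classical argument that a subgroup of a $\kappa$-bounded topological group is itself $\kappa$-bounded, letting the gyration-induced non-associativity be absorbed by the gyroinvariance of the members of $\mathscr U$. Recall that left $\kappa$-boundedness of $G$ means that for every neighborhood $V$ of $0$ there is $A\subset G$ with $|A|\leq\kappa$ and $G=A\oplus V$. Let $W$ be an arbitrary open neighborhood of $0$ in $H$ and write $W=O\cap H$ for an open neighborhood $O$ of $0$ in $G$. First I would use joint continuity of $\oplus$ together with the fact that $\mathscr U$ is a neighborhood base to choose a symmetric $V\in\mathscr U$ with $V\oplus V\subset O$; such a $V$ satisfies $\mathrm{gyr}[x,y](V)=V$ and $\ominus V=V$. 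Applying left $\kappa$-boundedness of $G$ to $V$ then produces $A\subset G$ with $|A|\leq\kappa$ and $G=A\oplus V$.

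Next I would isolate the translates that actually meet $H$. Set $B=\{a\in A:(a\oplus V)\cap H\neq\emptyset\}$ and, for each $a\in B$, fix a witness $h_{a}\in(a\oplus V)\cap H$. The claim to be established is that $H=\{h_{a}:a\in B\}\oplus W$. Since each $h_{a}\in H$, $W\subset H$, and $H$ is closed under $\oplus$, one inclusion is immediate, and $|\{h_{a}:a\in B\}|\leq|A|\leq\kappa$, so this claim finishes the proof. For the remaining inclusion, take $h\in H$ and write $h=a\oplus v$ with $a\in A$ and $v\in V$; then $h$ witnesses $(a\oplus V)\cap H\neq\emptyset$, so $a\in B$ and $h_{a}=a\oplus v'$ for some $v'\in V$.

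The core computation recovers $a$ from $h_{a}$ and then re-expresses $h$ relative to $h_{a}$. Using the left gyroassociative law (G3) in the form $(a\oplus v')\oplus\mathrm{gyr}[a,v'](\ominus v')=a\oplus(v'\oplus(\ominus v'))=a$, I obtain $a=h_{a}\oplus v''$, where $v''=\mathrm{gyr}[a,v'](\ominus v')\in V$ because $V$ is symmetric and gyroinvariant. Substituting and reassociating by (G3) gives $h=a\oplus v=(h_{a}\oplus v'')\oplus v=h_{a}\oplus\big(v''\oplus\mathrm{gyr}[v'',h_{a}](v)\big)$, so $h=h_{a}\oplus w_{0}$ with $w_{0}=v''\oplus\mathrm{gyr}[v'',h_{a}](v)\in V\oplus V\subset O$, again using $\mathrm{gyr}[v'',h_{a}](V)=V$. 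Finally, left cancellation yields $w_{0}=\ominus h_{a}\oplus h$, which lies in $H$ since $h_{a},h\in H$ and $H$ is a subgyrogroup closed under $\oplus$ and $\ominus$; hence $w_{0}\in O\cap H=W$ and $h\in h_{a}\oplus W$, as required.

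I expect the \emph{main obstacle} to be the bookkeeping of the gyrations in the middle step: the identity $a=(a\oplus v')\oplus\mathrm{gyr}[a,v'](\ominus v')$ and the reassociation of $(h_{a}\oplus v'')\oplus v$ via $\mathrm{gyr}[a,b]^{-1}=\mathrm{gyr}[b,a]$ must be carried out so that every stray gyration lands on an argument already sitting inside a gyroinvariant $V$, which is precisely what the strong hypothesis guarantees. Once one is confident that $\mathrm{gyr}[a,v'](\ominus v')$ and $\mathrm{gyr}[v'',h_{a}](v)$ remain inside $V$, the estimate $w_{0}\in V\oplus V\subset O$ is automatic, and the rest reduces to a routine appeal to left cancellation and the closure of $H$ under $\oplus$ and $\ominus$.
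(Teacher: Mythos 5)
Your proof is correct. Note that this paper does not actually contain a proof of Proposition~\ref{kmt1}: it is quoted from the reference [BL3] (``Submaximal properties in (strongly) topological gyrogroups''), so there is no in-paper argument to compare against line by line. What can be said is that your argument is the natural gyrogroup adaptation of the classical fact that a subgroup of a left $\kappa$-bounded topological group is left $\kappa$-bounded, and it uses precisely the gyration-bookkeeping technique this paper relies on elsewhere: recovering the base point of a translate via $(a\oplus v')\oplus\mathrm{gyr}[a,v'](\ominus v')=a$ and reassociating $(h_a\oplus v'')\oplus v=h_a\oplus\bigl(v''\oplus\mathrm{gyr}[v'',h_a](v)\bigr)$ so that every stray gyration acts on a gyroinvariant $V\in\mathscr U$ --- this is the same pattern that appears in the proofs of Theorem~\ref{kdl3} and Lemma~\ref{kyl2}. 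All the identities you invoke (left gyroassociativity, $\mathrm{gyr}^{-1}[a,b]=\mathrm{gyr}[b,a]$, left cancellation, closure of a subgyrogroup under $\oplus$ and $\ominus$) are standard and applied correctly, and the strong hypothesis ($\mathrm{gyr}[x,y](V)=V$ for \emph{all} $x,y\in G$, not just $x,y\in H$) is used exactly where it is needed, namely for gyrations with arguments $a\in G$ outside $H$; so the proof is complete as written.
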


\begin{lemma}\label{ylw1}
Let $G$ be a topological gyrogroup and $H$ an $L$-subgyrogroup of $G$. If $\varphi$ is a canonical mapping from $G$ onto the quotient space $G/H$ and $G$ is $\omega$-narrow, then $G/H$ is $\omega$-narrow.
\end{lemma}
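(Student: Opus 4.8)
The plan is to mirror the proof of Proposition \ref{mt1}, transporting the $\omega$-narrowness of $G$ across the canonical map $\varphi$, but with extra care because $G/H$ is a homogeneous coset space rather than a gyrogroup. Since $G$ is $\omega$-narrow it is simultaneously left and right $\omega$-narrow; I will deduce left $\omega$-narrowness of $G/H$ from left $\omega$-narrowness of $G$, the right-hand case being entirely symmetric.

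First I would recall that $\varphi$ is a continuous open surjection (the openness being \cite[Theorem 3.7]{BL}, as already invoked in the proof of Theorem \ref{kdl1}), and that $G/H$ is homogeneous, so that each left gyrotranslation $x\mapsto a\oplus x$ of $G$ descends, via the $L$-subgyrogroup property of $H$, to a well-defined homeomorphism $\widetilde{L}_{a}$ of $G/H$ satisfying $\varphi(a\oplus g)=\widetilde{L}_{a}(\varphi(g))$ for all $g\in G$. With this translation structure in hand, $\omega$-narrowness of $G/H$ is read off at the base point $\varphi(0)$ exactly as for gyrogroups: for every open neighborhood $O$ of $\varphi(0)$ there should exist a countable $A\subseteq G$ with $G/H=\bigcup_{a\in A}\widetilde{L}_{a}(O)$.

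Now fix an open neighborhood $O$ of $\varphi(0)$ in $G/H$. By continuity of $\varphi$ the preimage $\varphi^{-1}(O)$ is open in $G$, and since $\varphi(0)\in O$ it contains $0$; hence $\varphi^{-1}(O)$ is an open neighborhood of $0$. Applying the left $\omega$-narrowness of $G$ to $\varphi^{-1}(O)$ produces a countable set $A\subseteq G$ with $G=A\oplus\varphi^{-1}(O)$. I then push this equality forward under $\varphi$: surjectivity gives $\varphi(G)=G/H$, while the translation identity together with $\varphi(\varphi^{-1}(O))=O$ (surjectivity again) yields
$$G/H=\varphi\bigl(A\oplus\varphi^{-1}(O)\bigr)=\bigcup_{a\in A}\widetilde{L}_{a}\bigl(\varphi(\varphi^{-1}(O))\bigr)=\bigcup_{a\in A}\widetilde{L}_{a}(O).$$
As $A$ is countable, this is precisely the left $\omega$-narrowness of $G/H$, and the symmetric argument on the right completes the proof.

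The routine ingredients — continuity and openness of $\varphi$, surjectivity identities, and the coset arithmetic — are all already available from earlier in the paper, so the lemma reduces to a single application of $\varphi$ to the equality $G=A\oplus\varphi^{-1}(O)$. The one point demanding genuine care, and where I expect the real work to sit, is justifying the translation identity $\varphi(a\oplus g)=\widetilde{L}_{a}(\varphi(g))$, i.e. that left gyrotranslations descend to well-defined self-maps of $G/H$; this is exactly where the $L$-subgyrogroup hypothesis on $H$ is essential and where one must appeal to the homogeneity of $G/H$ established in \cite{BL1}. Once that identity is secured, everything else is bookkeeping.
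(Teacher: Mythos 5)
Your proposal is correct and follows essentially the same route as the paper, whose entire proof of this lemma is the single remark that it is ``similar to that of Proposition \ref{mt1}'' --- i.e.\ pull back the neighborhood under $\varphi$, apply $\omega$-narrowness of $G$, and push the resulting countable covering forward. The one subtlety you rightly isolate (that $\omega$-narrowness of the coset space must be read through the descended translations $\widetilde{L}_a$, which exist by the homogeneity of $G/H$) is exactly the point the paper disposes of in advance in Section~3, so your writeup is a faithful expansion of the intended argument.
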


\begin{proof}
The proof is similar to that of Proposition \ref{mt1}.
\end{proof}

\begin{lemma}\label{kyl2}
Every left $\omega$-narrow feathered strongly topological gyrogroup is Lindel\"{o}f.
\end{lemma}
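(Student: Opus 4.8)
The plan is to reduce the Lindelöf property of $G$ to that of a metrizable quotient, and then lift it back through a projection with compact fibers. Since $G$ is a feathered strongly topological gyrogroup, the characterization from \cite{BL} provides a compact $L$-subgyrogroup $H$ of $G$ such that the quotient space $G/H$ is metrizable. Let $\pi\colon G\rightarrow G/H$ be the canonical mapping; recall that $\pi$ is open by \cite[Theorem 3.7]{BL} and that, since $G/H$ is a partition of $G$ by \cite[Theorem 20]{ST}, we have $\pi^{-1}(\pi(a))=a\oplus H$ for every $a\in G$, so each fiber of $\pi$ is homeomorphic to the compact set $H$. The two things I need to establish are: first, that $G/H$ is Lindelöf, and second, that the Lindelöf property passes from $G/H$ up to $G$ because the fibers are compact.

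First I would show that $G/H$ is Lindelöf. Because $\pi$ is a continuous homomorphism of $G$ onto $G/H$ and $G$ is left $\omega$-narrow, the left-hand version of Proposition \ref{mt1} (equivalently Lemma \ref{ylw1}) shows that $G/H$ is left $\omega$-narrow; here I use the remark preceding Proposition \ref{mt1} that the notion of $\omega$-narrowness for the homogeneous coset space $G/H$ agrees with the gyrogroup one. Now $G/H$ is both metrizable and left $\omega$-narrow. Running the argument of Theorem \ref{kdl3} in this setting, for each basic neighborhood $V_n$ of the base point $\pi(0)$ there is a countable set $A_n$ with $G/H=A_n\oplus V_n$, and the union $\bigcup_n A_n$ is then a countable dense subset of $G/H$. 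Thus $G/H$ is separable; being metrizable, it is therefore second-countable, and hence Lindelöf.

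Finally I would lift the Lindelöf property to $G$. Let $\mathscr U$ be an open cover of $G$. Each fiber $a\oplus H$ is compact, so it is covered by finitely many members $U_1,\ldots ,U_k\in\mathscr U$; put $W=U_1\cup\cdots\cup U_k$, an open set containing $a\oplus H$. Using joint continuity of $\oplus$ at the points $a\oplus h$ with $h\in H$ together with the compactness of $H$, I obtain an open neighborhood $V$ of $a$ with $V\oplus H\subset W$, and since $\pi^{-1}(\pi(V))=V\oplus H$ this yields an open neighborhood $O_a=\pi(V)$ of $\pi(a)$ in $G/H$ with $\pi^{-1}(O_a)\subset W$, covered by finitely many elements of $\mathscr U$. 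The family $\{O_a:a\in G\}$ covers the Lindelöf space $G/H$, so countably many $O_{a_j}$ suffice; the corresponding finite subfamilies of $\mathscr U$ then union to a countable subcover of $G$, proving $G$ Lindelöf. I expect the main obstacle to be the tube-lemma step of the last paragraph, namely producing $V$ with $V\oplus H\subset W$ and verifying the identity $\pi^{-1}(\pi(V))=V\oplus H$ purely from the gyrogroup axioms and the gyration-invariance of the neighborhood base; this is exactly where the strongly-topological structure and the compactness of $H$ must be combined carefully, and it replaces the usual topological-group fact that $\pi$ is a perfect map.
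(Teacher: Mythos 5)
Your skeleton matches the paper's: pass to the compact $L$-subgyrogroup $H$ with $G/H$ metrizable, show $G/H$ is Lindel\"of, then lift Lindel\"ofness to $G$ through the compact fibers. Your lifting step is correct and is a legitimate self-contained alternative to the paper's, which simply cites \cite[Theorem 3.8]{BL} that $\pi$ is perfect and that Lindel\"ofness is an inverse invariant of perfect maps: the identity $\pi^{-1}(\pi(V))=V\oplus H$ does hold because the cosets partition $G$ and $0\in H$, and the tube step producing $V$ with $V\oplus H\subset W$ needs only joint continuity of $\oplus$, compactness of $H$, and the Wallace theorem (Lemma \ref{wdl}), with no gyration arguments at all. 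So you misjudged where the difficulty sits: the step you flagged as the main obstacle is harmless.

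The genuine gap is in the step you treat as routine, namely ``running the argument of Theorem \ref{kdl3}'' inside $G/H$. The coset space $G/H$ is not a gyrogroup, let alone a strongly topological one: there is no binary operation between cosets, no inverses, and no gyrations, so the expression $G/H=A_n\oplus V_n$ with $A_n\subset G/H$ is not even well-formed, and the two computations that drive Theorem \ref{kdl3} --- the identity $x=(x\oplus y)\oplus \mbox{gyr}[x,y](\ominus y)$ and the invariance $\mbox{gyr}[x,y](V_l)=V_l$ for $V_l\in\mathscr U$ --- have no counterparts for points and open subsets of $G/H$. The only sensible reading takes $A_n\subset G$ acting on cosets by $a\oplus(x\oplus H)=(a\oplus x)\oplus H$ (well defined since $\mbox{gyr}[\cdot,\cdot](H)=H$), but then the density of $\{\pi(a):a\in\bigcup_n A_n\}$ is exactly what must be proved, and the naive transfer fails: from $\pi(g)=\pi(a\oplus u)$ with $u\in U\in\mathscr U$ one obtains $a\in g\oplus(H\oplus U)$, whereas membership of $\pi(a)$ in the basic neighborhood $\pi(g\oplus U)$ requires $a\in (g\oplus U)\oplus H=g\oplus(U\oplus H)$, and interchanging $H\oplus U$ with $U\oplus H$ is not free in this setting. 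The paper resolves precisely this point by working upstairs in $G$: it fixes $U_n\in\mathscr U$ with $U_n\subset\pi^{-1}(V_n)$, applies the left $\omega$-narrowness of $G$ itself (not of $G/H$) to write $G=C_n\oplus U_n$ with $C_n$ countable, and then proves that $\pi\bigl(\bigcup_n C_n\bigr)$ is dense in $G/H$ by a gyration computation that uses the invariance of the $U_n$ and of $H$ (the Claim and Subclaim of its proof). That computation is the real content of the lemma, and it is missing from your proposal; Lemma \ref{ylw1} by itself only gives you the $\omega$-narrowness of $G/H$ as a definition-level statement, not the separability you need.
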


\begin{proof}
We assume that $G$ is a left $\omega$-narrow feathered strongly topological gyrogroup with a symmetric neighborhood base $\mathscr U$ at the identity element $0$ such that for any $x,y\in G$, $gyr[x,y](U)=U$ for any $U\in \mathscr U$. Since $G$ is feathered, it follows from \cite[Theorem 3.14]{BL} that there exists a compact $L$-subgyrogroup $H$ generated by $\mathscr U$ such that the quotient space $G/H$ is metrizable.

{\bf Claim} The quotient space $G/H$ is Lindel\"{o}f.

Let $\pi :G\rightarrow G/H$ be a natural homomorphism and it follows from \cite[Theorem 3.8]{BL} that $\pi$ is a perfect mapping. Therefore, $G/H$ is left $\omega$-narrow as a continuous homomorphic image of a left $\omega$-narrow topological gyrogroup $G$ by Lemma \ref{ylw1} . Moreover, $G/H$ is metrizable, so it is a first-countable space and we assume that $\mathscr V=\{V_{n}: n\in \omega\}$ is a countable base at the identity of $G/H$. Therefore, $\pi ^{-1}(V_{n})$ is an open neighborhood of $0$ in $G$ for every $n\in \omega$. For each $n\in \omega$, we can find $U_{n}\in \mathscr U$ such that $U_{n}\subset \pi ^{-1}(V_{n})$. Since $G$ is left $\omega$-narrow, there exists a countable set $C_{n}$ such that $G=C_{n}\oplus U_{n}$ for each $n\in \omega$. Set $C=\bigcup _{n\in \omega}C_{n}$. It is clear that $C$ is countable and we show that $\pi (C)$ is dense in the quotient space $G/H$.

For arbitrary open set $W$ in $G/H$, we need to prove $\pi (C)\cap W\not =\emptyset$, that is, $\pi ^{-1}\pi (C)\cap \pi ^{-1}(W)\not =\emptyset$. It means that $(C\oplus H)\cap (W\oplus H)\not =\emptyset$.

{\bf Subclaim} $(C\oplus H)\cap (W\oplus H)\not =\emptyset$ if and only if $C\cap (W\oplus H)\not =\emptyset$.

{\bf Sufficiency:} It is obvious because of $0\in H$.

{\bf Necessity:} If $(C\oplus H)\cap (W\oplus H)\not =\emptyset$, there are $c\in C,w\in W,h_{1},h_{2}\in H$ such that $c\oplus h_{1}=w\oplus h_{2}$. Since $H$ is generated by $\mathscr U$, we have that
\begin{eqnarray}
c&=&(c\oplus h_{1})\oplus gyr[c,h_{1}](\ominus h_{1})\nonumber\\
&=&(w\oplus h_{2})\oplus gyr[c,h_{1}](\ominus h_{1})\nonumber\\
&\in &(w\oplus h_{2})\oplus gyr[c,h_{1}](H)\nonumber\\
&=&(w\oplus h_{2})\oplus H\nonumber\\
&=&w\oplus (h_{2}\oplus gyr[h_{2},w](H))\nonumber\\
&=&w\oplus (h_{2}\oplus H)\nonumber\\
&=&w\oplus H.\nonumber
\end{eqnarray}
Therefore, $C\cap (W\oplus H)\not =\emptyset$.

Thus, it suffices to prove $C\cap (W\oplus H)\not =\emptyset$. Indeed, $W\oplus H$ is open in $G$ and we can find $y\in G$, $U_{n}\in \mathscr U$ and $U_{n}\subset \pi ^{-1}(V_{n})$ for some $n\in \omega$ such that $y\oplus U_{n}\subset W\oplus H$. we show that $C\cap (y\oplus U_{n})\not =\emptyset$. It means that there are $c\in C,u\in U_{n}$ such that $c=y\oplus u$. Therefore, $c\oplus gyr[y,u](\ominus u)=(y\oplus u)\oplus gyr[y,u](\ominus u)=y$. Since $C\oplus U_{n}=G$ and $gyr[y,u](\ominus u)\in gyr[y,u](U_{n})=U_{n}$, it follows that we can find $c$ and $u$ which are satisfied. Therefore, $\pi (C)\cap W\not =\emptyset$ and $G/H$ is separable. Moreover, $G/H$ is metrizable and separability is equivalent with the Lindel\"{o}f property in a metrizable space, so we deduce that the quotient space $G/H$ is Lindel\"{o}f.

Furthermore, the quotient mapping $\pi :G\rightarrow G/H$ is perfect and the property of Lindel\"{o}f is an inverse invariant of perfect mappings, hence we conclude that $G$ is Lindel\"{o}f and we complete the proof.
\end{proof}

\begin{theorem}\label{kdl2}
Let a feathered strongly topological gyrogroup $G$ be isomorphic to a subgyrogroup of a separable strongly topological gyrogroup. Then $G$ is separable.
\end{theorem}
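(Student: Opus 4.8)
The plan is to reduce immediately to the case where $G$ \emph{is} a subgyrogroup of a separable strongly topological gyrogroup $K$, since an isomorphism of (strongly) topological gyrogroups transfers separability, featheredness, and the quotient/$L$-subgyrogroup structure verbatim. Once this reduction is in place, I would first show that $G$ is left $\omega$-narrow. By Proposition \ref{ktl1} the separable gyrogroup $K$ is left $\omega$-narrow; since being left $\omega$-narrow is exactly the case $\kappa=\omega$ of being left $\kappa$-bounded, Proposition \ref{kmt1} applied to the subgyrogroup $G\leq K$ yields that $G$ is left $\omega$-narrow as well. Because $G$ is also feathered, Lemma \ref{kyl2} then gives that $G$ is Lindel\"of.

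Next I would unpack the feathered structure of $G$. By \cite[Theorem 3.14]{BL}, $G$ contains a compact $L$-subgyrogroup $H$ such that the quotient $G/H$ is metrizable; as $H$ is compact in the Hausdorff space $G$, it is a closed $L$-subgyrogroup. The natural homomorphism $\pi\colon G\to G/H$ is a continuous surjection, so $G/H$ is a continuous image of the Lindel\"of space $G$ and is therefore itself Lindel\"of. Since a metrizable Lindel\"of space is second-countable, and hence separable, I conclude that $G/H$ is separable.

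The strategy is then to feed this into Theorem \ref{kdl1}: once I also know that $H$ is separable, that theorem will give at once that $G$ is separable, since $H$ is a closed $L$-subgyrogroup and both $H$ and $G/H$ are separable. To approach the separability of $H$, I would use that $H$ is a compact subspace of the separable Hausdorff space $K$, so Lemma \ref{kyl1} provides the weight bound $w(H)\le\mathbf{c}$.

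The main obstacle is precisely the passage from $w(H)\le\mathbf{c}$ to the separability of $H$, and this is where I expect the real work to lie: a compact subspace of a separable space need not be separable (for instance $\beta\omega\setminus\omega$ sits inside the separable compactum $\beta\omega$), so this step cannot be topological alone and must exploit the algebraic structure of $H$. I anticipate closing it by invoking that a compact (strongly topological) gyrogroup is dyadic---the gyrogroup analogue of Kuz'minov's theorem for compact groups---together with the fact that every dyadic compactum $X$ satisfies $d(X)=\log w(X)$; the bound $w(H)\le\mathbf{c}=2^{\omega}$ would then force $d(H)\le\omega$, i.e.\ $H$ is separable. With $H$ and $G/H$ both separable, Theorem \ref{kdl1} completes the argument. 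If the dyadicity of compact gyrogroups is not available in the present setting, the fallback would be to establish directly that a compact left $\omega$-narrow strongly topological gyrogroup of weight at most $\mathbf{c}$ is separable, which I regard as the crux of the whole proof.
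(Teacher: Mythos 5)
Your proposal follows essentially the same route as the paper's proof: left $\omega$-narrowness of $G$ via Propositions \ref{ktl1} and \ref{kmt1}, Lindel\"ofness via Lemma \ref{kyl2}, separability of the metrizable quotient by the compact $L$-subgyrogroup, and then Theorem \ref{kdl1}. The one point where you diverge is instructive: the paper simply asserts that the compact $L$-subgyrogroup ``is separable by Lemma \ref{kyl1}'', although that lemma only yields a weight bound of $\mathbf{c}$, which --- as you rightly note with $\beta\omega\setminus\omega$ --- does not by itself imply separability. Your proposed repair (compact topological gyrogroups are rectifiable compacta by the cited result of Cai, Lin and He, hence dyadic by Uspenskij's theorem, and a dyadic compactum of weight at most $\mathbf{c}$ has density at most $\omega$) is exactly the argument used in the group case in \cite{LMT}, so your version is, if anything, more complete than the paper's at this step.
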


\begin{proof}
Assume that a feathered strongly topological gyrogroup $G$ is a subgyrogroup of a separable strongly topological gyrogroup $X$. It follows from Proposition \ref{ktl1} that the gyrogroup $X$ is left $\omega$-narrow. Hence, according to Proposition \ref{kmt1} that the subgyrogroup $G$ of $X$ is also left $\omega$-narrow. Moreover, every left $\omega$-narrow feathered strongly topological gyrogroup is Lindel\"{o}f by Lemma \ref{kyl2}. Furthermore, it follows from \cite[Theorem 3.14]{BL} that there is a compact $L$-subgyrogroup $K$ of $G$ such that the quotient space $G/K$ is metrizable. Note that the space $G/K$ is Lindel\"{o}f as a continuous image of the Lindel\"{o}f space $G$. Hence, $G/K$ is separable because of the equivalence between the properties of Lindel\"{o}f and Separable in a metrizable space.

Finally, the compact $L$-subgyrogroup $K$ is separable by Lemma \ref{kyl1}. Therefore, the separability of $G$ just follows from Theorem \ref{kdl1}.
\end{proof}

Since every metrizable strongly topological gyrogroup is feathered, it is clear that we can deduce the following corollaries from Theorem \ref{kdl2}.

\begin{corollary}\label{ktl2}
If a metrizable strongly topological gyrogroup $G$ is isomorphic to a subgyrogroup of a separable strongly topological gyrogroup, we have that $G$ is separable.
\end{corollary}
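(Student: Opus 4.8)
The plan is to reduce this statement directly to Theorem \ref{kdl2}, whose hypothesis is phrased in terms of \emph{feathered} strongly topological gyrogroups. The only new thing I need to establish is that a metrizable strongly topological gyrogroup is automatically feathered; once that is in hand, the conclusion is immediate.

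To see that metrizability implies featheredness, I would recall that a (strongly) topological gyrogroup is feathered precisely when it contains a non-empty compact subset of countable character in $G$. In a metrizable space every point has a countable neighborhood base, so the singleton $\{0\}$ consisting of the identity element is a non-empty compact subset whose character in $G$ is countable (a base for $G$ at the one-point set $\{0\}$ is exactly a local base at $0$, which is countable by first-countability). Hence $G$ is feathered. This is the single observation isolated in the sentence preceding the corollary.

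With this in place, the argument finishes in one line: since $G$ is metrizable it is feathered, and since $G$ is by hypothesis isomorphic to a subgyrogroup of a separable strongly topological gyrogroup, Theorem \ref{kdl2} applies verbatim and yields that $G$ is separable.

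I do not expect any genuine obstacle here---the entire content of the corollary lies in recognizing that first-countability upgrades the trivial compact set $\{0\}$ into a witness of featheredness. The substantive work has already been done inside the proof of Theorem \ref{kdl2}: left $\omega$-narrowness is transferred from the ambient separable gyrogroup via Proposition \ref{ktl1} and Proposition \ref{kmt1}, the Lindel\"{o}f property of left $\omega$-narrow feathered gyrogroups comes from Lemma \ref{kyl2}, and separability is assembled from the compact $L$-subgyrogroup together with the separable metrizable quotient through Theorem \ref{kdl1}. The corollary merely specializes the feathered hypothesis to the metrizable case, so no part of that machinery needs to be revisited.
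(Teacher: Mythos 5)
Your proposal is correct and follows exactly the paper's route: the paper likewise deduces the corollary from Theorem \ref{kdl2} via the observation that every metrizable strongly topological gyrogroup is feathered. Your justification of that observation (the singleton $\{0\}$ is a compact set whose character in $G$ equals the character of $G$ at the point $0$, which is countable by first-countability) is a correct filling-in of a detail the paper leaves implicit.
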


\begin{corollary}\cite{V,LS}
If a metrizable group $G$ is isomorphic to a subgroup of a separable topological group, then $G$ is separable.
\end{corollary}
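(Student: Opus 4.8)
The plan is to obtain this statement as the purely group-theoretic specialization of Corollary~\ref{ktl2}, so essentially no new work is needed beyond checking that the group-theoretic hypotheses are instances of the gyrogroup-theoretic ones. First I would recall the observation made right after Definition~\ref{d11}: every topological group is a strongly topological gyrogroup. Indeed, in a group all gyrations are trivial, i.e. $\mbox{gyr}[x,y]$ is the identity map for all $x,y$, so \emph{any} neighborhood base $\mathscr U$ of the identity automatically satisfies $\mbox{gyr}[x,y](U)=U$ for every $U\in\mathscr U$ and all $x,y$. Consequently, the metrizable group $G$ is a metrizable strongly topological gyrogroup, and the separable topological group, say $X$, in which $G$ embeds is a separable strongly topological gyrogroup.

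Next I would verify that a subgroup of $X$ is a subgyrogroup in the sense of the definition used in Corollary~\ref{ktl2}. Let $H\leq X$ be the subgroup isomorphic to $G$. Since $X$ is a group, $\mbox{gyr}[a,b]$ is the identity on all of $X$, so its restriction to $H$ is the identity automorphism of $H$, which is in particular an automorphism of $H$; moreover $H$, being a subgroup, forms a group (hence a gyrogroup) under the inherited operation. Thus $H$ is a subgyrogroup of $X$, and a topological group isomorphism $G\cong H$ is in particular a topological gyrogroup isomorphism. Therefore $G$ is (isomorphic to) a subgyrogroup of the separable strongly topological gyrogroup $X$.

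With both hypotheses of Corollary~\ref{ktl2} now verified, the conclusion follows at once: $G$ is separable. The only thing to be careful about is this bookkeeping translation between the group and gyrogroup vocabularies, namely that ``metrizable group,'' ``subgroup,'' and ``separable topological group'' are genuine instances of ``metrizable strongly topological gyrogroup,'' ``subgyrogroup,'' and ``separable strongly topological gyrogroup'' respectively. I do not expect any genuine obstacle here, precisely because the triviality of gyrations in a group makes the defining condition of a strongly topological gyrogroup hold automatically; the content of the result lies entirely in Corollary~\ref{ktl2} (and ultimately in Theorem~\ref{kdl2}), and this statement merely records its classical group-theoretic shadow, recovering the results of \cite{V,LS}.
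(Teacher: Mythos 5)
Your proposal is correct and matches the paper's intended argument exactly: the paper gives no separate proof, presenting this corollary as the immediate specialization of Corollary~\ref{ktl2} obtained by noting that every topological group is a strongly topological gyrogroup (gyrations being trivial) and every subgroup is a subgyrogroup. Your careful verification of that bookkeeping is precisely what the paper leaves implicit.
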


Indeed, the conclusion of Corollary \ref{ktl2} remains valid if $G$ is a subgyrogroup of a strongly topological gyrogroup $X$ with countable cellularity.

\begin{lemma}\label{4dl1}\cite{BL3}
Let $(G,\tau ,\oplus)$ be a strongly topological gyrogroup with a symmetric open neighborhood base $\mathscr U$ at $0$. If $c(G)\leq \kappa$, then $G$ is left $\kappa$-bounded.
\end{lemma}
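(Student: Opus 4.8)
The plan is to adapt the classical packing argument for topological groups (if $c(G)\le\kappa$ then $G$ is $\kappa$-bounded) to the gyrogroup setting, using the gyroassociative laws to keep track of the gyrations at each translation step. Fix an open neighborhood $V$ of $0$. The first step is to extract a symmetric ``half'' of $V$ lying in the distinguished base: by joint continuity of $\oplus$ at $(0,0)$ choose an open $V'$ with $V'\oplus V'\subset V$, and then pick $W\in\mathscr U$ symmetric with $W\subset V'$. Then $W\oplus W\subset V$ and, because $G$ is a strongly topological gyrogroup, $\mathrm{gyr}[x,y](W)=W$ for all $x,y\in G$; this invariance is exactly what makes the computation below go through.

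Next I would use Zorn's Lemma to choose a maximal subset $A\subseteq G$ such that the sets $\{a\oplus W:a\in A\}$ are pairwise disjoint. Since left translation by $a$ is a homeomorphism, each $a\oplus W$ is a non-empty open set, so $\{a\oplus W:a\in A\}$ is a disjoint family of non-empty open subsets of $G$. The hypothesis $c(G)\le\kappa$ then forces $|A|\le\kappa$, which will give the required bound on the covering.

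It remains to verify $G=A\oplus V$, and here the gyrogroup bookkeeping enters. Given $g\in G$, the open set $g\oplus W$ must meet some $a\oplus W$ with $a\in A$; otherwise $A\cup\{g\}$ would contradict maximality. Hence there are $a\in A$ and $w_1,w_2\in W$ with $g\oplus w_1=a\oplus w_2$. Applying the identity $x=(x\oplus y)\oplus \mathrm{gyr}[x,y](\ominus y)$ already used in the proof of Theorem~\ref{kdl3}, I get
\[
g=(g\oplus w_1)\oplus \mathrm{gyr}[g,w_1](\ominus w_1)=(a\oplus w_2)\oplus \mathrm{gyr}[g,w_1](\ominus w_1).
\]
Since $W$ is symmetric and $\mathrm{gyr}$-invariant, $w_3:=\mathrm{gyr}[g,w_1](\ominus w_1)\in \mathrm{gyr}[g,w_1](W)=W$. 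Then the left gyroassociative law lets me rewrite $(a\oplus w_2)\oplus w_3=a\oplus(w_2\oplus w_4)$, where $w_4=\mathrm{gyr}[a,w_2]^{-1}(w_3)$; as $\mathrm{gyr}[a,w_2]$ is an automorphism fixing $W$ setwise, so is its inverse, whence $w_4\in W$. Therefore $w_2\oplus w_4\in W\oplus W\subset V$, giving $g\in a\oplus V\subset A\oplus V$. Thus $G=A\oplus V$ with $|A|\le\kappa$, so $G$ is left $\kappa$-bounded.

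The only genuinely delicate point is the final regrouping: one must ensure that each gyration factor stays inside $W$, and this is precisely what the strongly topological hypothesis $\mathrm{gyr}[x,y](W)=W$ for $W\in\mathscr U$ guarantees. In a merely topological gyrogroup this control would be unavailable, so the symmetric $\mathrm{gyr}$-invariant base $\mathscr U$ is essential; the remainder is the standard maximal-disjoint-family argument combined with the cellularity bound.
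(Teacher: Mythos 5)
Your argument is correct: the maximal family $\{a\oplus W:a\in A\}$ of pairwise disjoint translates, the cellularity bound $|A|\le\kappa$, and the regrouping $g=(a\oplus w_2)\oplus w_3=a\oplus(w_2\oplus w_4)$ with every gyration factor kept inside $W$ via $\mathrm{gyr}[x,y](W)=W$ all check out, and this is exactly the standard topological-group proof correctly transported to the strongly topological gyrogroup setting. Note that the paper itself gives no proof of this lemma --- it is quoted from \cite{BL3} --- so there is nothing internal to compare against, but your proof is the expected one and is complete.
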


\begin{corollary}
If a metrizable strongly topological gyrogroup $G$ is isomorphic to a subgyrogroup of a strongly topological gyrogroup with countable cellularity, we have that $G$ is separable.
\end{corollary}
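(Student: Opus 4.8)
The plan is to reduce the hypothesis about countable cellularity to the hypothesis of Corollary~\ref{ktl2}, thereby recovering separability from the already-established Theorem~\ref{kdl2}. The key observation is that Corollary~\ref{ktl2} concludes separability whenever the metrizable strongly topological gyrogroup $G$ sits inside a separable strongly topological gyrogroup; so it suffices to manufacture, from the countable-cellularity hypothesis, an ambient separable strongly topological gyrogroup containing (an isomorphic copy of) $G$. Denote by $X$ the strongly topological gyrogroup with $c(X)\leq\omega$ into which $G$ embeds as a subgyrogroup.

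First I would apply Lemma~\ref{4dl1} to $X$: since $c(X)\leq\omega$, the gyrogroup $X$ is left $\omega$-bounded (left $\kappa$-bounded with $\kappa=\omega$). Next, $G$ is a subgyrogroup of $X$, so by Proposition~\ref{kmt1} the subgyrogroup $G$ is itself left $\omega$-bounded. The crux is then to recognize that \emph{left $\omega$-bounded} coincides with \emph{left $\omega$-narrow} in this setting: for $\kappa=\omega$, the defining condition of left $\kappa$-bounded says that for every neighborhood $V$ of $0$ there is a subset $A$ with $|A|\leq\kappa=\omega$ and $G=A\oplus V$, which is exactly the definition of left $\omega$-narrow given in Section~3. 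Thus $G$ is a left $\omega$-narrow metrizable (hence first-countable and feathered) strongly topological gyrogroup.

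With $G$ now known to be left $\omega$-narrow and metrizable, I would invoke the machinery already assembled for Theorem~\ref{kdl2}. Being metrizable, $G$ is feathered, so by Lemma~\ref{kyl2} every left $\omega$-narrow feathered strongly topological gyrogroup is Lindel\"of; hence $G$ is Lindel\"of. Since $G$ is metrizable and separability, the Lindel\"of property, and second-countability all coincide in metrizable spaces, $G$ is separable, completing the argument. Alternatively, one can bypass Lemma~\ref{kyl2} entirely by noting that a left $\omega$-narrow first-countable strongly topological gyrogroup has a countable base by Theorem~\ref{kdl3} (and is in particular separable by Corollary~\ref{wtl1}), which gives the conclusion directly once left $\omega$-narrowness of $G$ is in hand.

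I expect the only genuine obstacle to be the terminological bridge in the second paragraph: confirming that the paper's notion of left $\kappa$-bounded at $\kappa=\omega$ is literally the same condition as left $\omega$-narrow, so that Lemma~\ref{4dl1} and Proposition~\ref{kmt1} feed correctly into the separability results. Once that identification is made explicit, every remaining step is a direct citation of a result stated earlier in the paper, and no new estimates are required.
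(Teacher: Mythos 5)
Your proposal is correct and takes essentially the same route as the paper: apply Lemma~\ref{4dl1} to the ambient gyrogroup $X$, pass to the subgyrogroup $G$ via Proposition~\ref{kmt1}, and conclude separability from first-countability together with left $\omega$-narrowness --- indeed, your ``alternative'' finish via Theorem~\ref{kdl3} is exactly the paper's own proof, while your primary detour through Lemma~\ref{kyl2} and the Lindel\"{o}f property is a harmless equivalent variant. The terminological bridge you flag (left $\omega$-bounded $=$ left $\omega$-narrow) is real but is glossed over by the paper itself, which applies the $\kappa$-bounded lemmas and immediately speaks of left $\omega$-narrowness; making it explicit, as you do, is if anything an improvement.
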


\begin{proof}
Assume that $X$ is a strongly topological gyrogroup with countable cellularity. It follows from Lemma \ref{4dl1} that $X$ is left $\omega$-narrow. Then, $G$ is left $\omega$-narrow by Proposition \ref{kmt1}. Since $G$ is first countable and it follows from Theorem \ref{kdl3} that $G$ has a countable base. Therefore, $G$ is separable.
\end{proof}

Moreover, the Theorem \ref{kdl2} is also valid when $G$ is a locally compact strongly topological gyrogroup. It follows from \cite[3.1~E(b) and 3.3~H(a)]{E} that every locally compact topological gyrogroup is feathered. Therefore, we have the following results.

\begin{corollary}
If a locally compact strongly topological gyrogroup $G$ is isomorphic to a subgyrogroup of a separable strongly topological gyrogroup, we have that $G$ is separable.
\end{corollary}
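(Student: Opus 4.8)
The plan is to deduce the statement directly from Theorem \ref{kdl2}; the only hypothesis of that theorem that is not already given is featheredness, so the entire proof reduces to checking that a locally compact strongly topological gyrogroup is feathered and then quoting Theorem \ref{kdl2}.

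So the first step is to verify that $G$ is feathered. Since a topological gyrogroup is a homogeneous space (it is rectifiable by \cite{CZ}), local compactness of $G$ means precisely that the identity $0$ has a compact neighborhood, and by homogeneity the same holds at every point. Using the general-topology facts cited in \cite[3.1~E(b) and 3.3~H(a)]{E} --- the closed-projection/tube-lemma circle of ideas already exploited in Lemma \ref{wdl} --- one produces a non-empty compact set $K\subseteq G$ (a suitable compact neighborhood of $0$) with countable character $\chi(K,G)\le\omega$. By definition this is exactly the assertion that $G$ is feathered. This is the implication ``locally compact $\Rightarrow$ feathered'' already recorded in the paragraph preceding the statement, so in practice I would simply invoke it rather than reprove it.

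With featheredness established, the second step is immediate: $G$ is now a feathered strongly topological gyrogroup isomorphic to a subgyrogroup of a separable strongly topological gyrogroup, which is precisely the hypothesis of Theorem \ref{kdl2}; applying that theorem yields that $G$ is separable. For orientation, Theorem \ref{kdl2} proceeds by showing that the ambient separable gyrogroup is left $\omega$-narrow (Proposition \ref{ktl1}), that the subgyrogroup $G$ inherits this (Proposition \ref{kmt1}), hence that $G$ is Lindel\"{o}f (Lemma \ref{kyl2}); then, choosing a compact $L$-subgyrogroup $K$ with $G/K$ metrizable via \cite[Theorem 3.14]{BL}, one gets that $G/K$ is Lindel\"{o}f and therefore separable, that $K$ is separable (Lemma \ref{kyl1}), and finally that $G$ is separable by Theorem \ref{kdl1}.

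The only point that genuinely requires thought is the first step, the passage from local compactness to featheredness; and even this is not a real obstacle here, being a standard topological fact that the excerpt has already stated with references and that is forced by the homogeneity of gyrogroups. Granting it, the corollary is a one-line application of Theorem \ref{kdl2}.
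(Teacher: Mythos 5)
Your proof is correct and follows the paper's route exactly: the paper likewise just records that every locally compact topological gyrogroup is feathered (citing \cite[3.1~E(b) and 3.3~H(a)]{E}) and then applies Theorem \ref{kdl2}. One small caveat: your sketch of ``locally compact $\Rightarrow$ feathered'' via homogeneity and the Wallace-type Lemma \ref{wdl} is not quite the right mechanism (a compact neighborhood of $0$ need not have countable character without the \v{C}ech-completeness argument behind the cited facts), but since you ultimately defer to the standard cited result, as the paper does, this does not affect the proof.
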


\begin{corollary}\cite{CI}
If a locally compact topological group $G$ is isomorphic to a subgroup of a separable topological group, then $G$ is separable.
\end{corollary}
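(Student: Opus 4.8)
The plan is to obtain this corollary as an immediate specialization of the gyrogroup results already established, using the fact that topological groups form a subclass of strongly topological gyrogroups.

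First I would recall the observation recorded just after Definition~\ref{d11}: a group is precisely a gyrogroup all of whose gyrations $\mbox{gyr}[x,y]$ are the identity automorphism. Hence, for a topological group and any neighborhood base $\mathscr U$ of its identity, the equality $\mbox{gyr}[x,y](U)=U$ holds trivially for all $x,y$ and every $U\in\mathscr U$, so every topological group is a strongly topological gyrogroup. Under this identification a subgroup is a subgyrogroup, a topological isomorphism of groups is an isomorphism of strongly topological gyrogroups, and the notions of separability and of local compactness are unchanged. The one point worth stating explicitly---and essentially the only content of the argument---is that with all gyrations equal to the identity the gyrogroup axioms (G1)--(G4) collapse to the ordinary group axioms, so that the group-theoretic and gyrogroup-theoretic hypotheses are literally the same.

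With this dictionary in hand I would translate the hypotheses. Let $X$ be the separable topological group into which $G$ embeds as a subgroup; then $X$ is a separable strongly topological gyrogroup and $G$ is isomorphic to a subgyrogroup of $X$. Moreover $G$, being a locally compact topological group, is a locally compact strongly topological gyrogroup, and by the fact quoted just above (from \cite[3.1~E(b) and 3.3~H(a)]{E}) every locally compact topological gyrogroup is feathered; thus $G$ is a feathered strongly topological gyrogroup.

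Finally I would invoke Theorem~\ref{kdl2}: a feathered strongly topological gyrogroup that is isomorphic to a subgyrogroup of a separable strongly topological gyrogroup is separable. All of its hypotheses now hold for $G$, so $G$ is separable, which is exactly the assertion; equivalently, one may apply the preceding corollary on locally compact strongly topological gyrogroups to the same effect. There is no genuine analytic obstacle here---the content of the paper has already been spent in Theorem~\ref{kdl2}---and the only thing requiring care is the verification that the embedding of groups into strongly topological gyrogroups preserves every hypothesis, which is routine.
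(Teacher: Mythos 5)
Your proposal is correct and follows exactly the paper's route: the paper derives this group-theoretic corollary as the special case of the preceding corollary on locally compact strongly topological gyrogroups (itself a consequence of Theorem~\ref{kdl2} via the fact that locally compact topological gyrogroups are feathered), using the observation already made in Section~2 that every topological group is a strongly topological gyrogroup. Nothing further is needed.
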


It follows from \cite[Proposition 4.3.36]{AA} that every closed subspace of a feathered space is feathered. Moreover, the class of feathered topological gyrogroups is closed under countable products by Theorem \ref{dlc2}. Therefore, we obtain the following corollaries.

\begin{corollary}
Let $G$ be a separable locally compact strongly topological gyrogroup and $H$ be a separable feathered strongly topological gyrogroup. If a strongly topological gyrogroup $F$ is isomorphic to a closed subgyrogroup of $G\times H$, then $F$ is separable.
\end{corollary}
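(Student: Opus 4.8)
The plan is to recognize that the ambient product $G\times H$ is itself a separable feathered strongly topological gyrogroup, and then to apply Theorem \ref{kdl2} directly. Thus the whole argument reduces to verifying the hypotheses of Theorem \ref{kdl2} for $G\times H$ and transporting the feathered property to the closed subgyrogroup to which $F$ is isomorphic.

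First I would equip $G\times H$ with the product topology and the coordinatewise operation. Since the gyrations of a product act coordinatewise, a product $U_{1}\times U_{2}$ of gyration-invariant neighborhoods is again gyration-invariant, so $G\times H$ is a strongly topological gyrogroup, with neighborhood base at $0$ consisting of such products. Next I would check the two relevant global properties. As $G$ is locally compact it is feathered, and $H$ is feathered by hypothesis; hence by Theorem \ref{dlc2} the finite product $G\times H$ is feathered. Moreover $G$ and $H$ are both separable, and a finite product of separable spaces is separable: if $D_{1}$ and $D_{2}$ are countable dense subsets of $G$ and $H$, then $D_{1}\times D_{2}$ is a countable dense subset of $G\times H$. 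Therefore $G\times H$ is a separable feathered strongly topological gyrogroup.

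Now let $F'$ denote the closed subgyrogroup of $G\times H$ to which $F$ is isomorphic. Since $F'$ is a closed subspace of the feathered space $G\times H$, it is feathered by \cite[Proposition 4.3.36]{AA}; being a subgyrogroup of a strongly topological gyrogroup, $F'$ is itself a (feathered) strongly topological gyrogroup. Thus $F'$ is a feathered strongly topological gyrogroup which is isomorphic to a subgyrogroup of the separable strongly topological gyrogroup $G\times H$, and Theorem \ref{kdl2} yields that $F'$ is separable. Since separability is a topological invariant and $F\cong F'$, the gyrogroup $F$ is separable.

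The argument is essentially a bookkeeping assembly of earlier results, so the only genuinely delicate points are the two structural verifications: that the coordinatewise product remains a \emph{strongly} topological gyrogroup (so that Theorem \ref{kdl2} applies), and that feathered-ness passes to the closed subgyrogroup $F'$. The latter is exactly why the hypothesis requires $F'$ to be \emph{closed}: without closedness one cannot invoke \cite[Proposition 4.3.36]{AA}, and the reduction to Theorem \ref{kdl2} would break down. I expect the main substance to reside already in Theorem \ref{kdl2} (and in Lemmas \ref{kyl2} and \ref{kyl1} underlying it), so no new estimates should be needed here.
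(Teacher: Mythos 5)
Your proposal is correct and follows exactly the route the paper intends: it invokes Theorem \ref{dlc2} for the feathered product, \cite[Proposition 4.3.36]{AA} for passing feathered-ness to the closed subgyrogroup, the elementary fact that a finite product of separable spaces is separable, and then Theorem \ref{kdl2}. The paper gives no further detail beyond citing these same ingredients, so there is nothing to add.
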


\begin{corollary}
Let $G$ be a separable metrizable strongly topological gyrogroup and $H$ be a separable feathered strongly topological gyrogroup. If a strongly topological gyrogroup $F$ is isomorphic to a closed subgyrogroup of $G\times H$, then $F$ is separable.
\end{corollary}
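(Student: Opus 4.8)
The plan is to package the two factors into the single auxiliary gyrogroup $G\times H$ and reduce everything to Theorem~\ref{kdl2}. Concretely, I would verify that $G\times H$ is simultaneously a \emph{separable}, \emph{feathered}, strongly topological gyrogroup; once this is established, the conclusion for $F$ is immediate, because $F$ is isomorphic to a closed subgyrogroup of $G\times H$, and all three of the relevant properties behave well under the operations involved.

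First I would check that $G\times H$ is a separable strongly topological gyrogroup. Equipping $G\times H$ with the coordinatewise operation and the product topology makes it a topological gyrogroup, and since the gyrations act coordinatewise, the family $\{U_{G}\times U_{H}: U_{G}\in\mathscr U_{G},\ U_{H}\in\mathscr U_{H}\}$ of products of symmetric gyration-invariant neighborhoods from the two factors is a neighborhood base at $0$ that is fixed by every gyration; hence $G\times H$ is strongly topological. Separability is equally direct: if $D_{G}$ and $D_{H}$ are countable dense subsets of $G$ and $H$, then $D_{G}\times D_{H}$ is a countable dense subset of $G\times H$. For featheredness I would invoke Theorem~\ref{dlc2}: since $G$ is metrizable it is feathered, and $H$ is feathered by hypothesis, so the two-fold (hence countable) product $G\times H$ is feathered.

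It then remains to transfer these properties to $F$. Writing $F'$ for the closed subgyrogroup of $G\times H$ to which $F$ is isomorphic, I would use \cite[Proposition 4.3.36]{AA}, according to which a closed subspace of a feathered space is feathered; thus $F'$ is a feathered strongly topological gyrogroup sitting as a subgyrogroup inside the separable strongly topological gyrogroup $G\times H$. Theorem~\ref{kdl2} now applies verbatim and gives that $F'$ is separable, and since separability is invariant under topological gyrogroup isomorphism, $F$ is separable as well. I expect the only genuinely delicate points to be the verification that the product $G\times H$ retains the ``strongly'' structure (i.e.\ that the product neighborhood base is gyration-invariant) and that the closed subgyrogroup $F'$ inherits featheredness in the gyrogroup setting; every remaining step is bookkeeping, as the real work has already been carried out in Theorem~\ref{kdl2} and Theorem~\ref{dlc2}.
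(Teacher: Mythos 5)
Your proposal is correct and matches the paper's intended argument exactly: the paper likewise forms the product $G\times H$, notes it is feathered by Theorem~\ref{dlc2} (with $G$ feathered since it is metrizable) and separable, invokes \cite[Proposition 4.3.36]{AA} to get featheredness of the closed subgyrogroup, and then applies Theorem~\ref{kdl2}. Your extra remark about checking that the product carries a gyration-invariant neighborhood base is a reasonable point of care that the paper leaves implicit.
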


It is well-known that every strongly topological gyrogroup is a topological gyrogroup. So, it is natural to pose the following questions.

\begin{question}
If a topological gyrogroup is feathered, is it paracompact?
\end{question}

\begin{question}
If a topological gyrogroup is feathered, is it a $D$-space?
\end{question}

\begin{question}
If a feathered topological gyrogroup $G$ is isomorphic to a subgyrogroup of a separable topological gyrogroup, is $G$ separable?
\end{question}

\end{document}